\newtheorem{theorem}{Theorem}[section]
\theoremstyle{definition}
\newtheorem{definition}[theorem]{Definition}
\newtheorem{example}[theorem]{Example}
\theoremstyle{remark}
\newtheorem{remark}[theorem]{Remark}
\newtheorem{proposition}[theorem]{Proposition}
\newtheorem{corollary}[theorem]{Corollary}
\numberwithin{equation}{section}
\begin{document}

\title[On NI and NJ skew PBW extensions]{On NI and NJ skew PBW extensions}



\author{H\'ector Su\'arez}
\address{Universidad Pedag\'ogica y Tecnol\'ogica de Colombia, Sede Tunja}
\curraddr{Campus Universitario}
\email{hector.suarez@uptc.edu.co}
\thanks{}

\author{Andr\'es Chac\'on}
\address{Universidad Nacional de Colombia - Sede Bogot\'a}
\curraddr{Campus Universitario}
\email{anchaconca@unal.edu.co}
\thanks{}

\author{Armando Reyes}
\address{Universidad Nacional de Colombia - Sede Bogot\'a}
\curraddr{Campus Universitario}
\email{mareyesv@unal.edu.co}

\thanks{The authors were supported by the research fund of Faculty of Science, Code HERMES 52464, Universidad Nacional de Colombia - Sede Bogot\'a, Colombia.}

\subjclass[2020]{16N20;
16N40; 16S36; 16S37; 16S38}

\keywords{Jacobson radical, Levitzki radical, NI ring, NJ
ring, skew polynomial ring, skew PBW extension}

\date{}

\dedicatory{Dedicated to professor Oswaldo Lezama for his brilliant academic career at the \\ Universidad Nacional de Colombia - Sede Bogot\'a}

\begin{abstract}
We establish necessary or sufficient conditions to guarantee that skew Poincar\'e-Birkhoff-Witt extensions are NI or NJ rings. Our results extend those corresponding for skew polynomial rings and establish similar properties for other families of noncommutative rings such as universal enveloping algebras and examples of differential operators.

\end{abstract}

\maketitle


\section{Introduction}\label{introduction}


\label{intro}

Several kinds of rings are defined in terms of their set of nilpotent elements. For example, a ring $R$ is called {\em NI} if its set $N(R)$ of nilpotent elements coincides with its upper radical $N^{*}(R)$
(the sum of all its nil ideals of $R$). If $N(R) = J(R)$, where $J(R)$ is the Jacobson radical of $R$, then $R$ is called {\em NJ}. $R$ is said to be {\em 2-primal} if $N(R)$ is equal to the prime radical $N_{*}(R)$ of $R$ (the intersection of all prime ideals of $R$). $R$ is called {\em weakly 2-primal} if $N(R)$ coincides with its Levitzki
radical $L(R)$ (the sum of all locally nilpotent ideals of $R$). 

The NI and NJ rings have recently been investigated by several authors. For instance, Hwang et al. \cite{Hwang2006} studied the structure of NI rings
related to strongly prime ideals and showed that minimal strongly
prime ideals can be lifted in NI rings (\cite{Hwang2006}, Theorem 2.3). They proved that for an NI ring $R$, $R$ is {\em weakly
pm} (every strongly prime ideal of $R$ is contained in a unique maximal ideal of $R$) if and only if the topological space of maximal ideals of $R$ is a retract of the topological space of strongly prime ideals of $R$, or equivalently, if the topological space of strongly prime ideals of $R$ is normal (\cite{Hwang2006}, Theorem 3.7). Also, they proved that $R$ is weakly pm if and only if $R$ is {\em pm} (every prime ideal of $R$ is contained in a unique maximal ideal of $R$) when $R$ is a symmetric ring (that is, $rst =
0$ implies $rts = 0$, where $r, s, t\in
 R$) (\cite{Hwang2006}, Theorem
3.8).

Concerning skew polynomial rings (also known as Ore extensions) introduced  by Ore \cite{Ore1933}, Bergen and Grzeszczuk \cite{Bergen2012} studied the Jacobson radical of skew polynomial rings of
derivation type $R[x; \delta]$ when the base ring $R$ has no nonzero
nil ideals. They proved that if $R$ is an algebra with no
nonzero nil ideals satisfying the acc condition on right annihilators of
powers, then $J(R[x;\delta])=0$ (\cite{Bergen2012}, Theorem 2). In the case that $R$ is a semiprime algebra where every nonzero ideal contains a
normalizing element, then $J(R[x;\delta])=0$ (\cite{Bergen2012}, Theorem 3). Related to this topic, Nasr-Isfahani
\cite{Nasr-Isf2014} gave necessary and sufficient conditions for a
skew polynomial ring of derivation type $R[x; \delta]$ to be
semiprimitive when $R$ has no nonzero nil ideals (\cite{Nasr-Isf2014}, Corollary 2.2). He also proved that $J(R[x; \delta]) = N(R[x;
\delta]) = N(R)[x; \delta]$ if and only if $N(R)$ is a $\delta$-ideal of $R$ (i.e., $N(R)$ is an ideal of $R$ and $\delta(N(R))\subseteq N(R)$) and  $N(R[x; \delta]) = N(R)[x;
\delta]$ (\cite{Nasr-Isf2014}, Proposition 2.7). Now, according to
\cite{Nasr-Isf2014}, Proposition 2.8, if $R[x; \delta]$ is NI then $J(R[x; \delta]) = N(R[x; \delta])=N(R)[x; \delta]=N^{*}(R)[x;
\delta]= N^{*}(R[x; \delta])$. 

Later, Nasr-Isfahani \cite{Nasr-Isf2015} computed the Jacobson radical of an NI $\mathbb{Z}$-graded ring $R = \bigoplus_{i\in \mathbb{Z}}R_i$. He showed that $J(R) = N(R)$ if and only if $R$ is NI ring and $J(R)\cap R_0$ is nil (\cite{Nasr-Isf2015}, Theorem 2.4). He also proved that $R[x; \sigma]$ is NJ if and only if $R[x; \sigma]$ is NI and $J(R[x; \sigma])\cap R \subseteq N(R)$ (\cite{Nasr-Isf2015}, Corollary 2.5 (1)). For a skew polynomial ring of mixed type $R[x; \sigma, \delta]$, he showed that $R[x; \sigma, \delta]$ is NI
and $N(R)$ is $\sigma$-rigid (i.e,  $r\sigma(r)\in N(R)$ implies
$r\in N(R)$, where $r\in R$) if and only if $N(R)$ is a
$\sigma$-invariant ideal  of $R$ ($N(R)$ is an ideal and
$\sigma^{-1}(N(R))=N(R)$) and $N(R[x; \sigma, \delta])=N(R)[x; \sigma, \delta]$, and equivalently, $N(R)$ is a $\sigma$-rigid ideal of $R$ and $N^{*}(R[x; \sigma,
\delta])=N^{*}(R)[x; \sigma, \delta]$ (\cite{Nasr-Isf2015}, Theorem 3.1). 


Jiang et al., \cite{Jiang2019} studied the relationship between NJ rings and some families of rings. They investigated extensions as Dorroh, Nagata, and Jordan. For a ring $R$ and an automorphism $\sigma$ of $R$, they proved that if $R$ is weakly 2-primal $\sigma$-compatible (following Annin \cite{Annin2004}, Definition 2.1, $R$ is said to be $\sigma$-{\em compatible} if for each $a, b\in R$, $ab = 0$ if and only if $a\sigma(b)=0$), then $R[x; \sigma]$ is NJ (\cite{Jiang2019}, Theorem 3.10 (1)), and if $R$ is a weakly 2-primal $\delta$-compatible ring ($R$ is said to be $\delta$-{\em compatible} if for each $a, b\in R$, $ab = 0$ implies $a\delta(b)=0$ (\cite{Annin2004}, Definition 2.1)), then $R[x; \delta]$ is NJ (\cite{Jiang2019}, Theorem 3.12 (1)). Moreover, they considered some topological conditions for NJ rings and showed relations between algebraic and topological notions (\cite{Jiang2019}, Section 4). 

Han et al., \cite{Han2021} showed that if $R$ is an NI ring, $a, b\in R$, and $ab\in Z(R)$, the center of $R$, then $ab-ba\in N^{*}(R)$, and there exists $l \geq 1$ such that $(ab)^n= (ba)^n$, for every $n\geq l$ (\cite{Han2021}, Theorem 1.3 (3)). They also proved that for the
ideal $I$ of $R$ generated by the subset $\{ab- ba\mid a, b\in R,\ {\rm such\ that}\ ab\in Z(R)\}$, if $R$ is NI then $I$ is nil and $R/I$ is an Abelian (i.e., the idempotents of $R$ are central elements of $R$) NI ring (\cite{Han2021}, Theorem 1.3 (5)).


With respect to the objects of interest in this paper, the {\em skew Poincar\'e-Birkhoff-Witt} ({\em PBW}) {\em extensions}, these were defined by Gallego and Lezama \cite{LezamaGallego} with the aim of generalizing families of noncommutative rings as PBW extensions introduced by Bell and Goodearl \cite{BellGoodearl1988}, skew polynomial rings (of injective type) defined by Ore \cite{Ore1933}, and others as solvable polynomial rings, diffusion algebras, some types of Auslander-Gorenstein rings, some Calabi-Yau and skew Calabi-Yau algebras, some Artin-Schelter regular algebras, some Koszul algebras, and others (see \cite{Fajardoetal2020} or \cite{SuarezLezamaReyes2107-1} for a detailed reference to each of these families). Several ring and theoretical properties of skew PBW extensions have been studied by different authors (see Artamonov \cite{Artamonov2015}, Fajardo et al., \cite{Fajardoetal2020}, Hamidizadeh et al., \cite{Hamidizadehetal2020}, Hashemi et al., \cite{HashemiKhalilAlhevaz2017},  \cite{HashemiKhalilGhadiri2019}, Lezama et al., \cite{JimenezLezama2016}, \cite{Lezama2020},  \cite{LezamaGallego2016}, \cite{LezamaGomez2019}, \cite{LezamaVanegas2020b}, Tumwesigye et al., \cite{TumwesigyeRichterSilvestrov2019}, and Zambrano \cite{Zambrano2020}). In particular, the 2-primal property for these objects has been investigated by Hashemi et al., \cite{HashemiKhalilAlhevaz2019}, Louzari and Reyes \cite{LouzariReyes2020}, and the authors \cite{ReyesSuarez2019Radicals}. Nevertheless, NI and NJ properties have not been studied for skew PBW extensions, so it is a natural task to find necessary or sufficient conditions under which they can be NI and NJ. This is the objective of the paper, and therefore our results contribute to the study of ideals and radicals of skew PBW extensions that has been partially carried out (see \cite{HashemiKhalilAlhevaz2019},  \cite{AcostaLezamaReyes},  \cite{LouzariReyes2020}, \cite{NinoReyesRamirez2020}, \cite{NinoReyes2020}, and \cite{ReyesSuarez2018-2}), and establish ring-theoretical properties for noncommutative rings not considered in the literature. As a matter of fact, we generalize some results appearing in Jiang et al.,  \cite{Jiang2019}, and Nasr-Isfahani \cite{Nasr-Isf2014} and \cite{Nasr-Isf2015}.

The paper is organized as follows. In Section \ref{section-prelim}, we recall definitions and properties needed for the rest of the paper. Section \ref{sect-NIskew} contains the results about the NI property for skew PBW extensions. More exactly, for $A=\sigma(R)\langle x_1,\dots, x_n\rangle$ a skew PBW extension over a ring $R$, we prove the following results: if $R$ is weak $(\Sigma, \Delta)$-compatible, then $R$ is NI if and only if $A$ is NI (Theorem \ref{teo- RweakCompNIiff A NI}). If $A$ is of derivation type over $R$, then  $A$ is NI if and
only if $N(R)$ is a $\Delta$-invariant ideal of $R$ and
$N(A)=N(R)\langle x_1,\dots, x_n\rangle$ (Proposition
\ref{prop-derivtype}). $A$ is NI and $N(R)$ is $\Sigma$-rigid if and only if $N(R)$ is a $\Sigma$-ideal of $R$ and $N(A) =
N(R)\langle x_1, \dots, x_n \rangle$, and  equivalently,  $N(R)$ is a
$\Sigma$-rigid ideal of $R$ and $N^{*}(A)= N^{*}(R)\langle x_1,
\dots, x_n \rangle$ (Theorem \ref{teo.generteo3.1}). Next, in Section \ref{sect-NJ},  results about NJ
property and their relations with NI property for skew PBW extensions are presented. We prove that if $A$ is a
graded skew PBW extension over an $\mathbb{N}$-graded ring $R=\bigoplus_{n\in \mathbb{N}}R_n$, then
$A$ is NJ if and only if $A$ is NI and $J(A)\cap R_0$ is a nil ideal
(Theorem \ref{teo-NJgradedSkew}). If furthermore $A$ is connected, then  $A$ is NJ if and only if $A$ is NI (Corollary
\ref{cor-ConecNJiifNI}). If $A$ is quasi-commutative, bijective and $R$ is a weakly 2-primal weak $\Sigma$-compatible ring, then $A$ is NJ (Theorem
\ref{teo-genteo3.10}). If $A$ is of derivation type, then
$A$ is NI if and only if $A$ is NJ, and equivalently, $R$ is NI and $N(A) = N(R)\langle x_1,\dotsc, x_n\rangle$ if and only if $R$ is NI and $N^{*}(A) = N^{*}(R)\langle x_1,\dotsc, x_n\rangle$ (Proposition \ref{cor.gencor3.2}). Finally, if $A$ is quasi-commutative, then $A$
is NJ and $N(A)=N(R)\langle x_1, \dots, x_n \rangle$ if and only if $N(R)$ is a $\Sigma$-ideal of $R$ and
$N(A)=N(R)\langle x_1, \dots, x_n \rangle$, or equivalently, $A$ is NI and $N(R)$ is $\Sigma$-rigid if and only if $N(R)$ is
$\Sigma$-rigid ideal of $R$ and $N^{*}(A)= N^{*}(R)\langle x_1,
\dots, x_n \rangle$ (Proposition \ref{prop-quasicomm}).

Finally, we present some ideas for a future work.

\section{Preliminaries}\label{section-prelim}

Throughout the paper, the term ring means an associative ring with
identity not necessarily commutative. For a ring $R$, as we saw above, we fix the following notation: $N_{*}(R)$ is its {\em prime radical}, $N^{*}(R)$ is its {\em upper radical}, 
$N(R)$ is its {\em set of nilpotent elements}, $J(R)$ is its {\em Jacobson radical}, and $L(R)$ is its {\em Levitzki radical}. It is well-known that the following relations hold: $N_{*}(R)\subseteq L(R)\subseteq N^{*}(R)\subseteq
N(R)$ and $N_{*}(R)\subseteq L(R)\subseteq N^{*}(R)\subseteq J(R)$ (see \cite{Lam1991}, \cite{Marks2003} or \cite{Nasr-Isf2014}, for more details). 

$R$ is called \emph{nil-semisimple} if it has no nonzero nil ideals. Recall that nil-semisimple rings are semiprime (\cite{Hwang2006}, p. 187). If $P$ is a prime ideal of $R$, $P$ is called \emph{completely prime} if $R/P$ is a domain, and $P$ is said to be \emph{strongly prime} if $R/P$
is nil-semisimple. Note that maximal ideals and completely prime ideals are strongly prime; any
strongly prime ideal contains a minimal strongly prime ideal;  $N^{*}(R)$ is the unique maximal nil ideal of $R$; $N^{*}(R) = \{a\in R\mid RaR \text{ is a nil ideal of } R\} = \bigcap \{P \mid P \text{
is a  strongly prime ideal of } R\}$ (see \cite{Hwang2006} or \cite{Lam1991} for more details). 

For a ring $R$, 2-primal implies
weakly 2-primal, whence NI and NJ are examples of NI rings. $R$ is
reduced (i.e., without nonzero nilpotent elements) if and only if $R$ is nil-semisimple and NI, or equivalently, $R$ is semiprime and 2-primal (\cite{Hwang2006}, p. 187). As we can check, $R$ is 2-primal if and only if $N_{*}(R)= N^{*}(R)=
N(R)$. Shin \cite{Shin1973}, Proposition 1.11, proved that the set of nilpotent elements of a ring $R$ coincides with the prime radical
if and only if every minimal prime ideal of $R$ is completely prime. Thus, $R$ is 2-primal if and only if every minimal prime ideal of
$R$ is completely prime, or equivalently, $R/N_{*}(R)$ is reduced. 

A ring $R$ is said to be \emph{semicommutative} if for every pair of
elements $a, b\in R$, we have that $ab = 0$ implies $aRb = 0$. Shin \cite{Shin1973}, Lemma 1.2 and Theorem 1.5, established that semicommutative rings are 2-primal, and hence semi-commutative rings are NI. Note that domains are reduced rings, reduced rings are symmetric, symmetric rings are reversible ($R$ is said to be {\em reversible} if $ab = 0$ implies $ba = 0$, where $a, b\in R$), and reversible rings are semicommutative, but the converses are not true in general (see \cite{Marks2003}). A ring is called
\emph{right} (left) {\em duo} if every right (left) ideal of it is
two-sided. Shin \cite{Shin1973}, Lemma 1.2, showed that right (left) duo rings are semicommutative. Therefore, NI rings contain several families of rings such as domains, reduced rings, symmetric rings, semi-commutative rings, reversible rings, one-sided duo rings, 2-primal rings and NJ rings (for more details, see \cite{Marks2001}). K\"othe's
conjecture (the upper nilradical contains every nil left
ideal) holds for NI rings (Hwang et al., \cite{Hwang2006}, p. 192).

Equivalent definitions for NI rings are presented in Proposition \ref{prop-DefeqNI}.

 \begin{proposition}[\cite{Hwang2006}, Lemma 2.1]\label{prop-DefeqNI} For a ring $R$, the following conditions are equivalent.
 \begin{enumerate}
 \item[\rm (i)] $R$ is NI.
 \item[\rm (ii)] $N(R)$ is an ideal.
 \item[\rm (iii)] Every subring {\rm (}possibly without identity{\rm )} of $R$ is NI.
\item[\rm (iv)] Every minimal strongly
prime ideal of $R$ is completely prime.
\item[\rm (v)] $R/N^{*}(R)$ is a reduced ring.
\item[\rm (vi)] $R/N^{*}(R)$ is a symmetric ring.
\end{enumerate}
\end{proposition}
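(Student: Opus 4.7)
The plan is to organize the six-way equivalence around condition (i) as a hub, establishing (i) $\Leftrightarrow$ (ii), (i) $\Leftrightarrow$ (iii), the cycle (i) $\Rightarrow$ (v) $\Rightarrow$ (vi) $\Rightarrow$ (i), and finally (i) $\Leftrightarrow$ (iv). The two routine spokes are (i) $\Leftrightarrow$ (ii) and (i) $\Leftrightarrow$ (iii). For (i) $\Leftrightarrow$ (ii), I use that $N^{*}(R)$ is always an ideal contained in $N(R)$, so (i) trivially gives (ii); conversely, if $N(R)$ is an ideal it is automatically a nil ideal, whence $N(R) \subseteq N^{*}(R)$ and equality holds. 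For (ii) $\Leftrightarrow$ (iii), if $N(R)$ is an ideal of $R$ and $S$ is any subring (possibly non-unital), then $N(S) = S \cap N(R)$ is an ideal of $S$, so $S$ is NI by the already-proved (ii) $\Rightarrow$ (i); the reverse is immediate by taking $S = R$.

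For the cycle through (v) and (vi), assume (i). Any nilpotent coset in $R/N^{*}(R) = R/N(R)$ lifts to a nilpotent element of $R$, which lies in $N(R)$, so $N(R/N^{*}(R)) = 0$, giving (v). Since reduced rings are symmetric (a chain of implications recorded in the preliminaries), (v) yields (vi). For (vi) $\Rightarrow$ (i), set $\overline{R} := R/N^{*}(R)$; by Shin's theorem cited above, symmetric implies semicommutative implies 2-primal, so $N(\overline{R}) = N_{*}(\overline{R})$. But $N_{*}(\overline{R}) \subseteq N^{*}(\overline{R}) = 0$ because the upper nil radical vanishes after quotienting by itself. Lifting, every nilpotent of $R$ lies in $N^{*}(R)$, so $N(R) \subseteq N^{*}(R)$ and (i) follows.

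For (i) $\Leftrightarrow$ (iv), I use the identity $N^{*}(R) = \bigcap \{P : P \text{ minimal strongly prime}\}$, which comes from the preliminaries together with the fact that every strongly prime ideal contains a minimal one. If each such $P$ is completely prime, then $R/P$ is a domain, forcing $N(R) \subseteq P$, and intersecting gives $N(R) \subseteq N^{*}(R)$, hence (i). Conversely, assuming (i), the quotient $R/N^{*}(R)$ is reduced by (v); in a reduced ring, minimal primes are completely prime (via the 2-primal characterization recalled above), and these correspond through the canonical projection to the minimal strongly primes of $R$, yielding (iv).

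The main obstacle I expect is the careful bookkeeping in (vi) $\Rightarrow$ (i): the chain symmetric $\Rightarrow$ semicommutative $\Rightarrow$ 2-primal must be applied to the quotient $R/N^{*}(R)$ rather than to $R$ itself, and one must know that $N^{*}$ vanishes after quotienting by itself so that the 2-primal hypothesis forces the reducedness of the quotient. The rest of the equivalences reduce to tracking which of the radicals $N_{*}$, $N^{*}$, $N$ coincides with which intersection of prime-like ideals, all facts already laid out in the preamble.
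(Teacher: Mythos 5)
Your proof is essentially correct, but note that the paper does not prove this proposition at all: it is quoted verbatim from Hwang, Jeon and Lee (\cite{Hwang2006}, Lemma 2.1) and used as a black box, so there is no in-paper argument to compare against. Your hub-and-spoke organization around (i) is sound, and every individual step checks out: the equivalences (i)$\Leftrightarrow$(ii)$\Leftrightarrow$(iii) are routine once one observes $N(S)=S\cap N(R)$, the cycle through (v) and (vi) correctly uses that $N^{*}(R/N^{*}(R))=0$ (an extension of a nil ideal by a nil ideal is nil) together with the chain symmetric $\Rightarrow$ reversible $\Rightarrow$ semicommutative $\Rightarrow$ 2-primal applied to the quotient, and (iv)$\Rightarrow$(i) follows from $N^{*}(R)=\bigcap\{P : P \text{ minimal strongly prime}\}$. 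The only spot that deserves one more line is (i)$\Rightarrow$(iv): to transfer "minimal primes of the reduced quotient are completely prime'' to "minimal \emph{strongly} primes are completely prime,'' you should observe that in a reduced ring every minimal prime is completely prime, hence strongly prime, and therefore any minimal strongly prime ideal, containing some minimal prime that is itself strongly prime, must coincide with it; your phrase "these correspond through the canonical projection'' glosses over exactly this identification.
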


Some examples of NJ rings are nil rings, division rings, Boolean
rings, commutative Jacobson rings, commutative affine algebras over a field $\mathbb{K}$, semi-Abelian $\pi$-regular rings, locally finite Abelian rings (Jiang et al., \cite{Jiang2019}, Example 2.5). Every reduced ring is NJ (\cite{Jiang2019}, Proposition
2.11). Note that $R = \mathbb{Z}[[x]]$ is a domain and
hence NI, with $N(R) = \{0\}$, but $J(\mathbb{Z}[[x]]) =
x\mathbb{Z}[[x]]\neq \{0\}$ and so $R$ is not
an NJ ring (\cite{Jiang2019}, Example 2.2). This example shows that NI are not included in NJ rings.

Next, we present the objects of interest in this paper with some of their ring-theoretical notions. The symbol $\mathbb{N}$ denotes the set of natural numbers including the zero element.

\begin{definition}[\cite{LezamaGallego}, Definition 1]\label{def.skewpbwextensions}
Let $R$ and $A$ be rings. $A$ is called a \textit{skew PBW extension
over} $R$, denoted by $A=\sigma(R)\langle x_1,\dots,x_n\rangle$, if the
following conditions hold:
\begin{enumerate}
\item[\rm (i)]$R$ is a subring of $A$ sharing the same identity element.
\item[\rm (ii)] there exist finitely many elements $x_1,\dots ,x_n\in A$ such that $A$ is a left free $R$-module,
    with basis the
set of standard monomials
\begin{center}
${\rm Mon}(A):= \{x^{\alpha}:=x_1^{\alpha_1}\cdots
x_n^{\alpha_n}\mid \alpha=(\alpha_1,\dots ,\alpha_n)\in
\mathbb{N}^n\}$.
\end{center}
Moreover, $x^0_1\cdots x^0_n := 1 \in {\rm Mon}(A)$.
\item[\rm (iii)]For each $1\leq i\leq n$ and any $r\in R\ \backslash\ \{0\}$, there exists an
    element $c_{i,r}\in R\ \backslash\ \{0\}$ such that $x_ir-c_{i,r}x_i\in R$.
\item[\rm (iv)]For $1\leq i,j\leq n$, there exists $d_{i,j}\in R\ \backslash\ \{0\}$ such that
\begin{equation}\label{sigmadefinicion2}
x_jx_i-d_{i,j}x_ix_j\in R+Rx_1+\cdots +Rx_n.
\end{equation}
\end{enumerate}

Note that every element $f\in A\ \setminus\ \{0\}$ can be expressed
uniquely as $f= a_0 + a_1X_1+\dotsb +a_mX_m$, with $a_i\in R\setminus
\{0\}$ and $X_i\in {\rm Mon}(A)$, $0 \leq i\leq m$ (\cite{LezamaGallego}, Remark 2). For $X =
x^{\alpha}=x_1^{\alpha_1}\cdots x_n^{\alpha_n}\in {\rm Mon}(A)$,
$\deg(X)=|\alpha|:= {\alpha_1}+\cdots + {\alpha_n}$. 
\end{definition}

\begin{proposition}[\cite{LezamaGallego}, Proposition 3]\label{sigmadefinition}
Let $A=\sigma(R)\langle x_1,\dots,x_n\rangle$ be a skew PBW
extension over $R$. For each $1\leq i\leq n$, there exist an
injective endomorphism $\sigma_i:R\rightarrow R$ and a
$\sigma_i$-derivation $\delta_i:R\rightarrow R$ such that
$x_ir=\sigma_i(r)x_i+\delta_i(r)$, for every $r \in R$.
\end{proposition}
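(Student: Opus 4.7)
The plan is to read $\sigma_i$ and $\delta_i$ off directly from condition (iii) in the definition of skew PBW extension and then verify the required algebraic properties by comparing coefficients in the free left $R$-module $A$. For each $i \in \{1,\dots,n\}$ and each nonzero $r \in R$, condition (iii) produces $c_{i,r} \in R \setminus \{0\}$ such that $x_i r - c_{i,r} x_i \in R$; I set $c_{i,0} := 0$ and denote the element $x_i r - c_{i,r} x_i \in R$ by $d_{i,r}$. Uniqueness of both $c_{i,r}$ and $d_{i,r}$ is guaranteed by condition (ii): since ${\rm Mon}(A)$ is a left $R$-basis of $A$, the coefficients of $1$ and $x_i$ in the expansion of $x_i r$ are uniquely determined by $r$. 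This yields well-defined maps $\sigma_i(r) := c_{i,r}$ and $\delta_i(r) := d_{i,r}$ satisfying $x_i r = \sigma_i(r) x_i + \delta_i(r)$ for every $r \in R$.

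The remaining verifications reduce to routine coefficient comparisons in ${\rm Mon}(A)$. For additivity, expanding $x_i(r+s)$ in two ways and equating the coefficients of $x_i$ and $1$ forces both $\sigma_i$ and $\delta_i$ to be additive. For multiplicativity, I apply the defining identity twice:
\[
x_i(rs) = (x_i r)s = \bigl(\sigma_i(r) x_i + \delta_i(r)\bigr) s = \sigma_i(r)\bigl(\sigma_i(s) x_i + \delta_i(s)\bigr) + \delta_i(r) s,
\]
and compare with $x_i(rs) = \sigma_i(rs) x_i + \delta_i(rs)$. This delivers $\sigma_i(rs) = \sigma_i(r)\sigma_i(s)$ together with $\delta_i(rs) = \sigma_i(r)\delta_i(s) + \delta_i(r) s$, which is exactly the $\sigma_i$-derivation identity. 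Taking $r = 1$ in $x_i r = \sigma_i(r) x_i + \delta_i(r)$ forces $\sigma_i(1) = 1$ and $\delta_i(1) = 0$, so $\sigma_i$ is a unital ring endomorphism.

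Injectivity of $\sigma_i$ is essentially built into condition (iii): the requirement that $c_{i,r}$ be nonzero whenever $r$ is nonzero is exactly the statement that $\sigma_i$ has trivial kernel. There is no substantial obstacle in this argument; the only point requiring care is the well-definedness of $\sigma_i$ and $\delta_i$ as functions $R \to R$, which depends entirely on the left-free $R$-module structure supplied by condition (ii). Once the uniqueness of representations with respect to the basis ${\rm Mon}(A)$ is secured, the rest is mechanical bookkeeping.
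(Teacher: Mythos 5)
Your argument is correct and is essentially the standard proof of this result: the paper itself states the proposition without proof, citing \cite{LezamaGallego}, Proposition 3, and the argument given there proceeds exactly as you do, reading $\sigma_i(r)$ and $\delta_i(r)$ off as the (unique, by freeness of $A$ over ${\rm Mon}(A)$) coefficients of $x_i$ and $1$ in $x_ir$, then deriving additivity, multiplicativity, the $\sigma_i$-derivation identity, and injectivity by coefficient comparison. No gaps.
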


From now on $\Sigma:= \{\sigma_1,\dotsc, \sigma_n\}$ and $\Delta:=
\{\delta_1,\dotsc, \delta_n\}$. We say that $(\Sigma, \Delta)$ is a
\emph{system of endomorphisms and $\Sigma$-derivations} of $R$ with
respect to $A$. For $\alpha=(\alpha_1,\dots,\alpha_n)\in
\mathbb{N}^n$, $\sigma^{\alpha}:=\sigma_1^{\alpha_1}\circ \cdots
\circ \sigma_n^{\alpha_n}$, $\delta^{\alpha}:=
\delta_1^{\alpha_1}\circ \cdots \circ \delta_n^{\alpha_n}$, where
$\circ$ denotes composition of functions.

Following \cite{LezamaGallego}, Definition 4,  skew PBW extension $A$ is called \textit{bijective} if $\sigma_i$
is bijective  and $d_{i,j}$ is invertible for any $1\leq i<j\leq n$.
 $A$ is called \textit{quasi-commutative} if the conditions
{\rm(}iii{\rm)} and {\rm(}iv{\rm)} in Definition
\ref{def.skewpbwextensions} are replaced by the following:
\begin{enumerate}
\item[\rm (iii')] for each $1\leq i\leq n$ and all $r\in R\ \backslash\ \{0\}$, there exists $c_{i,r}\in R\ \backslash\ \{0\}$ such that
$x_ir=c_{i,r}x_i$;
\item[\rm (iv')]for any $1\leq i,j\leq n$, there exists $d_{i,j}\in R\ \backslash\ \{0\}$ such that $x_jx_i=d_{i,j}x_ix_j$.
\end{enumerate}
If $\sigma_i$ is the identity map of $R$, for each $i =1,\dotsc, n$, (we write $\sigma_i={\rm id}_R$), we say that
$A$ is a skew PBW extension of \textit{derivation type}. Similarly, if $\delta_i=0$, for each $\delta_i\in \Delta$, then $A$ is said to be a skew PBW extension of \textit{endomorphism type}.
\begin{remark}\label{comparison}
\begin{enumerate}
\item [\rm (i)] From Definition \ref{def.skewpbwextensions} (iv), it is clear that skew PBW extensions are more general than iterated skew polynomial rings (cf. \cite{GomezSuarez2019}). For example, universal enveloping algebras of finite dimensional Lie algebras and some 3-dimensional skew polynomial algebras in the sense of Bell and Smith \cite{BellSmith1990} (see also Rosenberg \cite{Rosenberg1995}) cannot be expressed as iterated skew polynomial rings but are skew PBW extensions. For quasi-commutative skew PBW extensions, these are isomorphic to iterated Ore extensions of endomorphism type (\cite{LezamaReyes2014}, Theorem 2.3). 
\item [\rm (ii)] Skew PBW extensions of endomorphism type are more general than iterated Ore extensions of endomorphism type. Let us illustrate the situation with two and three indeterminates.
		
	For the iterated Ore extension of endomorphism type $R[x;\sigma_x][y;\sigma_y]$, if $r\in R$ then we have the following relations: $xr = \sigma_x(r)x$, $yr = \sigma_y(r)y$, and $yx = \sigma_y(x)y$. Now, if we have $\sigma(R)\langle x, y\rangle$ a skew PBW extension of endomorphism type over $R$, then for any $r\in R$, Definition \ref{def.skewpbwextensions} establishes that $xr=\sigma_1(r)x$, $yr=\sigma_2(r)y$, and $yx = d_{1,2}xy + r_0 + r_1x + r_2y$, for some elements $d_{1,2}, r_0, r_1$ and $r_2$ belong to $R$. From these relations it is clear which one of them is more general.
	
If we have the iterated Ore extension $R[x;\sigma_x][y;\sigma_y][z;\sigma_z]$, then for any $r\in R$, $xr = \sigma_x(r)x$, $yr = \sigma_y(r)y$, $zr = \sigma_z(r)z$, $yx = \sigma_y(x)y$, $zx = \sigma_z(x)z$, $zy = \sigma_z(y)z$. For the skew PBW extension of endomorphism type $\sigma(R)\langle x, y, z\rangle$, $xr=\sigma_1(r)x$, $yr=\sigma_2(r)y$, $zr = \sigma_3(r)z$, $yx = d_{1,2}xy + r_0 + r_1x + r_2y + r_3z$, $zx = d_{1,3}xz + r_0' + r_1'x + r_2'y + r_3'z$, and $zy = d_{2,3}yz + r_0'' + r_1''x + r_2''y + r_3''z$, for some elements $d_{1,2}, d_{1,3}, d_{2,3}, r_0, r_0', r_0'', r_1, r_1', r_1'', r_2, r_2', r_2'', r_3$, $r_3', r_3''$ of $R$. As the number of indeterminates increases, the differences between both algebraic structures are more remarkable.
\item [\rm (iii)] PBW extensions introduced by Bell and Goodearl \cite{BellGoodearl1988} are particular examples of skew PBW extensions (see \cite{LezamaGallego}, Section 1 for a detailed description). By definition, the first objects satisfy the relation $x_ir = rx_i + \delta_i(r)$, so that these structures are examples of skew PBW extensions of derivation type. As examples, we mention the following:  the enveloping algebra of a finite-dimensional Lie algebra; any differential operator ring $R[\theta_1,\dotsc, \theta_1;\delta_1,\dotsc, \delta_n]$ formed from commuting derivations $\delta_1,\dotsc, \delta_n$; differential operators introduced by Rinehart; twisted or smash product differential operator rings, and others (for more details, see \cite{BellGoodearl1988}, p. 27). 
\end{enumerate}
\end{remark}

From Definition \ref{def.skewpbwextensions}, it follows that skew PBW extensions are not $\mathbb{N}$-graded ring in a non-trivial sense. With this in mind, Proposition \ref{prop.grad A} allows to define a subfamily of these extensions, the {\em graded skew PBW extensions} (Definition \ref{def. graded skew PBW ext}) that were introduced by the first author in \cite{Suarez}. Before presenting the definition, we recall that if $R=\bigoplus_{p\in \mathbb{N}}R_p$ and $S=\bigoplus_{p\in \mathbb{N}}S_p$ are $\mathbb{N}$-graded rings, then a map $\varphi : R\to S$ is called \emph{graded} if $\varphi(R_p)\subseteq
S_p$, for each $p\in \mathbb{N}$. For $m\in \mathbb{N}$,
$R(m):=\bigoplus_{p\in \mathbb{N}}R(m)_p$, where $R(m)_p:=R_{p+m}$.

\begin{proposition}[\cite{Suarez}, Proposition
2.7(ii)]\label{prop.grad A} Let $A=\sigma(R)\langle x_1,\dots,
x_n\rangle$ be a bijective skew PBW extension over an
$\mathbb{N}$-graded algebra $R=\bigoplus_{m\geq 0}R_m$. If the
following conditions hold:
\begin{enumerate}
\item[\rm (i)] $\sigma_i$ is a graded ring homomorphism and $\delta_i : R(-1) \to R$ is a graded $\sigma_i$-derivation, for all $1\leq i  \leq n$, and
\item[\rm (ii)]  $x_jx_i-d_{i,j}x_ix_j\in R_2+R_1x_1 +\cdots + R_1x_n$, as in {\rm (}\ref{sigmadefinicion2}{\rm )} and $d_{i,j}\in R_0$,
\end{enumerate}
then $A$ is an $\mathbb{N}$-graded algebra with graduation given by
$A = \bigoplus_{p\geq 0} A_p$, where for $p\geq 0$, $A_p$ is the
$\mathbb{K}$-space generated by the set
\[\Bigl\{r_tx^{\alpha} \mid t+|\alpha|= p,\  r_t\in R_t \text{  and } x^{\alpha}\in {\rm
Mon}(A)\Bigr\}.
\]
\end{proposition}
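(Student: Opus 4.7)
The plan is to verify the two defining properties of an $\mathbb{N}$-graded algebra: that $A=\bigoplus_{p\geq 0}A_p$ as $\mathbb{K}$-vector spaces, and that $A_p\cdot A_q\subseteq A_{p+q}$ for all $p,q\geq 0$. The first property is essentially formal. Since ${\rm Mon}(A)$ is a left free $R$-basis of $A$ by Definition \ref{def.skewpbwextensions}, every $f\in A$ has a unique expansion $f=\sum_{\alpha}r_\alpha x^\alpha$. Decomposing each $r_\alpha=\sum_t r_{\alpha,t}$ with $r_{\alpha,t}\in R_t$ according to the grading of $R$ and collecting terms by total degree $t+|\alpha|=p$, I obtain $f=\sum_p f_p$ with $f_p\in A_p$; uniqueness of this decomposition follows from uniqueness of the $r_\alpha$ together with uniqueness of the homogeneous decompositions in $R$.

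For the multiplicative property, it suffices to prove that $(r_tx^\alpha)(s_ux^\beta)\in A_{t+|\alpha|+u+|\beta|}$ for all $r_t\in R_t$, $s_u\in R_u$, and $x^\alpha,x^\beta\in{\rm Mon}(A)$. I would rewrite this product as $r_t(x^\alpha s_u)x^\beta$ and establish two technical lemmas. First, that $x^\alpha s\in A_{|\alpha|+u}$ for any $s\in R_u$. The base case $x_is=\sigma_i(s)x_i+\delta_i(s)$ from Proposition \ref{sigmadefinition} uses hypothesis (i): since $\sigma_i$ is graded, $\sigma_i(s)\in R_u$, so $\sigma_i(s)x_i\in A_{u+1}$; since $\delta_i\colon R(-1)\to R$ is graded, $\delta_i(s)\in R_{u+1}\subseteq A_{u+1}$. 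Second, that $x^\alpha x^\beta\in A_{|\alpha|+|\beta|}$. The crucial base case here is the product $x_jx_i$ with $j>i$: by hypothesis (ii), $x_jx_i=d_{i,j}x_ix_j+r_0+\sum_k r_kx_k$ with $d_{i,j}\in R_0$, $r_0\in R_2$, and $r_k\in R_1$, so every summand lies in $A_2$.

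The principal technical difficulty is organizing the induction cleanly, because reducing $x^\alpha x^\beta$ to standard form interleaves commutations of the $x_jx_i$-type (which replace a length-two monomial by summands of monomial length $0$, $1$, or $2$ carrying coefficients of compensating higher $R$-degree) with the rewriting $x_ir=\sigma_i(r)x_i+\delta_i(r)$ (which trades monomial length for $R$-degree through $\delta_i$). To avoid circularity I would assign to every word formed from the $x_i$'s and homogeneous elements of $R$ the weight equal to the length of the word in the $x_i$'s plus the sum of the $R$-degrees of its coefficients, and observe that each defining relation of $A$ is weight-homogeneous of weight zero, precisely by hypotheses (i) and (ii). This reduces both lemmas to the single observation that standard-form reduction preserves this weight, so that each product $(r_tx^\alpha)(s_ux^\beta)$ lands in $A_{t+|\alpha|+u+|\beta|}$ as required, completing the proof.
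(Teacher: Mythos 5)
This proposition is quoted from \cite{Suarez}, Proposition 2.7(ii), and the present paper gives no proof of it, so there is nothing internal to compare against; judged on its own terms, your argument is correct and follows the natural route one would expect from the source. The decomposition $A=\bigoplus_p A_p$ does follow formally from the left $R$-freeness of $A$ on ${\rm Mon}(A)$ together with the grading of $R$, and your verification of the base cases is right: hypothesis (i) gives $\sigma_i(R_u)\subseteq R_u$ and $\delta_i(R_u)\subseteq R_{u+1}$ (since $R_u=R(-1)_{u+1}$), so $x_is\in A_{u+1}$, and hypothesis (ii) places every summand of $x_jx_i$ in $A_2$. Your weight function (monomial length plus total $R$-degree of coefficients) is exactly the right bookkeeping device, and each rewriting rule — including the multiplication $R_t\cdot R_u\subseteq R_{t+u}$ inside $R$ — is weight-homogeneous. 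The only point you leave implicit is that the reduction of an arbitrary product to standard form terminates; this is handled by the usual double induction (on total monomial degree, then on the number of out-of-order pairs), using that the non-leading terms of the $x_jx_i$ relation have strictly smaller monomial length, and since ${\rm Mon}(A)$ is a basis the resulting standard form is unique and hence lies in $A_{t+|\alpha|+u+|\beta|}$ as you claim. This is a minor organizational matter rather than a gap.
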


\begin{definition}[\cite{Suarez}, Definition 2.6]\label{def. graded skew PBW ext}
Let $A=\sigma(R)\langle x_1,\dots, x_n\rangle$ be a bijective skew
PBW extension over an $\mathbb{N}$-graded algebra
$R=\bigoplus_{m\geq 0}R_m$. If $A$ satisfies the conditions (i) and
(ii) established in Proposition \ref{prop.grad A}, then we say that
$A$ is a {\em graded skew PBW extension over} $R$.
\end{definition}

We present some remarkable examples of graded skew PBW extensions.

\begin{example}\label{ejem-clifford}
The Jordan plane, homogenized enveloping algebras, and some classes of diffusion algebras (\cite{Suarez}, Examples 2.9). Graded Clifford algebras defined by Le Bruyn
\cite{Le Bruyn 1995} are also examples of graded skew PBW extensions if we assume the condition of PBW basis. Let us see the details.

Following Cassidy and Vancliff \cite{Cassidy2010}, let $\mathbb{K}$ be an algebraically
closed field such that ${\rm char}(\mathbb{K})\neq 2$ and let $M_1,\dots,M_n\in \mathbb{M}_n(\mathbb{K})$ be symmetric matrices of order $n\times n$ with entries in $\mathbb{K}$. A \emph{graded Clifford
algebra} $\mathcal{A}$ is a $\mathbb{K}$-algebra  on degree-one generators $x_1,\dots, x_n$ and on degree-two generators $y_1,\dots, y_n$ with defining relations given by:
\begin{enumerate}
\item[(i)] $x_ix_j+x_jx_i=\sum_{k=1}^n(M_k)_{ij}y_k$ for all
$i,j=1,\dots,n$;
\item[(ii)] $y_k$ central for all $k = 1,\dots,n$.
\end{enumerate}
Note that the commutative polynomial ring $R=\mathbb{K}[y_1,\dots,
y_n]$ is an $\mathbb{N}$-graded algebra where $R_0=\mathbb{K}, R_1= \{0\}$, 
$y_1,\dots, y_n\in R_2$, and $R_i = \{0\}$, for $i\ge 3$. If we suppose that the set $\{x_1^{a_1}\dotsb x_n^{a_n}\mid a_i\in \mathbb{N}, i=1,\dotsc, n\}$ is a left PBW $R$-basis for $\mathcal{A}$, then the graded Clifford algebra $\mathcal{A}$ is a graded skew PBW extension over the connected algebra $R$, that is, $\mathcal{A} \cong \sigma(R)\langle x_1,\dotsc, x_n\rangle$. Indeed, from the relations (i) and (ii) above, it is clear that 
$\sigma_i = {\rm id}_R$, $\delta_i=0$, $d_{i,j}=-1\in
R_0$, for $1\leq i,j\leq n$,  and $\sum_{k=1}^n(M_k)_{ij}y_k\in R_2$, where $d_{i,j}$ is given as in expression (\ref{sigmadefinicion2}). In this way, $\mathcal{A}$ is a bijective skew PBW extension that satisfies the conditions of
Proposition \ref{prop.grad A}. 
\end{example}

Let $A=\sigma(R)\langle x_1, \dots, x_n\rangle$ be a skew PBW extension over a ring $R$, and consider the sets $\Sigma= \{\sigma_1, \dots, \sigma_n\}$ and
$\Delta:= \{\delta_1, \dots, \delta_n\}$ as above. An ideal $I$ of $R$ is
called $\Sigma$-\emph{ideal} if $\sigma_i(I)=I$, for each $1\leq
i\leq n$. From Lezama et al., \cite{AcostaLezamaReyes}, Definition 2.1, $I$ is called $\Sigma$-\emph{invariant} if
$\sigma_i(I)\subseteq I$, and it is called $\Delta$-\emph{invariant}
if $\delta_i(I)\subseteq I$, for $1\leq i\leq n$. If $I$ is both $\Sigma$ and $\Delta$-invariant, we
say that $I$ is $(\Sigma,\Delta)$-\emph{invariant}. From Reyes 
\cite{Reyes2015}, Definition 3.2, $R$ is $\Sigma$-\emph{rigid} if
    $r\sigma^{\alpha}(r)=0$ implies $r=0$, where $r\in R$ and
    $\alpha\in \mathbb{N}^n$. A subset $S\subseteq
R$ is $\Sigma$-\emph{rigid} if
    $r\sigma^{\alpha}(r)\in S$ implies $r\in S$, where $r\in R$ and
    $\alpha\in \mathbb{N}^n$. Following Hashemi et al., \cite{HashemiKhalilAlhevaz2019}, for $S\subseteq R$, we denote the set of all elements of $A$ with
coefficients in $S$ by $S\langle x_1,\dots,x_n\rangle$. Definitions of $(\Sigma,\Delta)$-ideal $I$ and $I\langle
x_1,\dots,x_n\rangle=I\langle x_1,\dots,x_n; \Sigma, \Delta\rangle$ introduced by Hashemi et al., \cite{HashemiKhalilAlhevaz2019} are the same as those of $(\Sigma,\Delta)$-invariant ideal $I$ and $IA$, 
respectively, considered in \cite{AcostaLezamaReyes}. Note that the terminology
used by Nasr-Isfahani \cite{Nasr-Isf2015} is different, since for $\sigma$ an
endomorphism of $R$, $\delta$ a $\sigma$-derivation of $R$, and $I$ an ideal of $R$, $I$ is called $\sigma$-{\em invariant} if
$\sigma^{-1}(I) = I$, and it is called a $\delta$-{\em ideal} if
$\delta(I)\subseteq I$; this same terminology is used in Nasr-Isfahani
\cite{Nasr-Isf2014}, \cite{Nasr-Isf2015}. Finally, from \cite{AcostaLezamaReyes},
Proposition 2.6 (i), if $I$ is a $(\Sigma,\Delta)$-invariant ideal
of $R$, then $IA=I\langle x_1,\dots,x_n; \Sigma,
\Delta\rangle=I\langle x_1,\dots,x_n\rangle $ is an ideal of $A$,
$IA\cap R = I$ and $IA$ is proper if and only if $I$ is proper.

Next, we present different ring-theoretical notions that have been defined for skew PBW extensions and extend corresponding notions for skew polynomial rings (for more details, see  \cite{ReyesSuarez2018-3}).
\begin{definition}
	Let $A = \sigma(R)\langle x_1,\dotsc, x_n\rangle$ be a skew PBW extension over $R$.
\begin{enumerate}
	\item [\rm (i)] (\cite{ReyesSuarez2018-3}, Definition 3.3) $R$ is called a  $\Sigma$-\emph{skew Armendariz} ring if for elements $f=\sum_{i=0}^{m} a_iX_i$ and $g = \sum_{j=0}^{t} b_jY_j$ in $A$, the equality $fg=0$ implies $a_i\sigma^{\alpha_i}(b_j)=0$,  for all $0\le i\le m$ and $0\le j\le t$, where $\alpha_i={\rm exp}(X_i)$.   $R$ is called a \emph{weak $\Sigma$-skew Armendariz} ring if for elements $f=\sum_{i=0}^{n} a_ix_i$ and $g = \sum_{j=0}^{n} b_jx_j$ in $A$ $(x_0:=1)$, the equality $fg=0$ implies $a_i\sigma_i(b_j)=0$, for all $0\le i, j \le n$ $(\sigma_0:={\rm id}_R)$.
    \item [\rm (ii)] (\cite{HashemiKhalilAlhevaz2017}, Definition 3.2; \cite{ReyesSuarez2018-2}, Definition 3.1) $R$ is said to be $\Sigma$-{\em compatible} if for each $a,b\in R$,
        $a\sigma^{\alpha}(b) = 0$ if and only if $ab=0$, where $\alpha\in \mathbb{N}^{n}$.
        $R$ is said to be $\Delta$-{\em compatible} if
        for each $a,b \in R$,  $ab=0$ implies $a\delta^{\beta}(b)=0$, where $\beta \in \mathbb{N}^{n}$.
        If $R$ is both
        $\Sigma$-compatible and $\Delta$-compatible, then $R$ is called
        $(\Sigma, \Delta)$-{\em compatible}.
        \item [\rm (iii)] (\cite{ReyesSuarez2019-2}, Definition 4.1) $R$ is said to be {\em weak $\Sigma$-compatible} if for each $a,b\in R$,
        $a\sigma^{\alpha}(b) \in N(R)$ if and only if $ab \in N(R)$, where $\alpha\in \mathbb{N}^{n}$.
        $R$ is said to be {\em weak $\Delta$-compatible} if
        for each $a,b \in R$,  $ab\in N(R)$ implies $a\delta^{\beta}(b)\in N(R)$, where $\beta \in \mathbb{N}^{n}$.
        If $R$ is both
        weak $\Sigma$-compatible and  weak $\Delta$-compatible, $R$ is called
        {\em weak $(\Sigma, \Delta)$- compatible}.
        \end{enumerate}
\end{definition}

Several relations between above skew Armendariz notions and different examples can be found in \cite{ReyesRodriguez2019}, Example 3.4, and \cite{ReyesSuarez2019-1}, Section 5.

\begin{remark}
Let $A=\sigma(R)\langle x_1, \dots, x_n\rangle$ be a skew PBW
extension over a ring $R$ and $I$ an ideal of $R$. 
\begin{enumerate}
\item[(i)] $I$ is $\Sigma$-invariant if and only if
$\sigma^{\alpha}(I)\subseteq I$, for every $\alpha\in
\mathbb{N}^{n}$.
\item[(ii)] $I$ is $\Delta$-invariant if and only if
$\delta^{\alpha}(I)\subseteq I$, for every $\alpha\in
\mathbb{N}^{n}$.
\item[(iii)] $I$ is  $\Sigma$-ideal if and only if $\sigma_i^{-1}(I)=I$, for each $1\leq i\leq n$. Also, if for each
$1\leq i\leq n$,  $\sigma_i^{-1}(I)=I$ then $\sigma^{-\alpha}(I)=I$,
for $\alpha\in \mathbb{N}^{n}$.
 Therefore, the definition of $\Sigma$-ideal given in this paper
coincides with the definition of $\Sigma$-invariant ideal given by Hashemi et al., \cite{HashemiKhalilAlhevaz2019}, Definition 3.1, and the definition
of $\alpha$-invariant ideal presented by Nasr-Isfahani \cite{Nasr-Isf2015}, p. 5116.
\end{enumerate}
\end{remark}

\section{NI skew PBW extensions}\label{sect-NIskew}
This section contains the original results of the paper about NI property for skew PBW extensions. We start with Proposition \ref{prop-skewPBWcompNI} which follows directly from \cite{LouzariReyes2020}, Theorem 3.9, and \cite{ReyesSuarez2019Radicals}, Proposition 4.4. Recall that a ring $R$ is {\em locally finite} if every finite subset in $R$ generates a finite semigroup multiplicatively.

\begin{proposition}\label{prop-skewPBWcompNI}
Let $A=\sigma(R)\langle x_1, \dots, x_n\rangle$ be a skew PBW
extension over $R$. If $R$ satisfies one of the following conditions,
\begin{enumerate}
\item[\rm (i)] $R$ is 2-primal and $(\Sigma, \Delta)$-compatible, or
\item[\rm (ii)] $R$ is locally finite, $(\Sigma,
\Delta)$-compatible and $\Sigma$-skew Armendariz,
\end{enumerate}
then $A$ is an NI ring.
\end{proposition}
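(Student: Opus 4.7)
The plan is to combine the two cited external results with the standard radical-inclusion chain recorded in Section~\ref{section-prelim}. Recall from the preliminaries that for any ring $S$ one has
\[
N_{*}(S)\subseteq L(S)\subseteq N^{*}(S)\subseteq N(S),
\]
so to establish that a ring is NI (that is, $N(S)=N^{*}(S)$) it suffices to verify any strictly stronger equality, in particular $N(S)=N_{*}(S)$ (the 2-primal property) or $N(S)=L(S)$ (the weakly 2-primal property). This reduces the proposition to locating, in the existing literature on skew PBW extensions, a theorem that transfers 2-primality (or weak 2-primality) from $R$ to $A$ under the appropriate compatibility hypotheses.

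For case (i), I would simply invoke \cite{LouzariReyes2020}, Theorem 3.9, which asserts that if $R$ is 2-primal and $(\Sigma,\Delta)$-compatible, then the skew PBW extension $A=\sigma(R)\langle x_1,\dots,x_n\rangle$ is itself 2-primal. The inclusion chain above then gives $N(A)=N_{*}(A)\subseteq N^{*}(A)\subseteq N(A)$, hence $N(A)=N^{*}(A)$, which by Proposition~\ref{prop-DefeqNI}(ii) means $A$ is NI.

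For case (ii), I would analogously invoke \cite{ReyesSuarez2019Radicals}, Proposition 4.4, which under the three hypotheses that $R$ is locally finite, $(\Sigma,\Delta)$-compatible, and $\Sigma$-skew Armendariz, concludes that $A$ is weakly 2-primal. Again the chain forces $N(A)=L(A)\subseteq N^{*}(A)\subseteq N(A)$, so $N(A)=N^{*}(A)$ and $A$ is NI.

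The proof is genuinely a corollary, so there is no essential mathematical obstacle; the only care required is bookkeeping. I would check that the compatibility and Armendariz terminology used in the two cited papers agrees literally with the definitions fixed in Section~\ref{section-prelim} (in particular the symmetric form of $(\Sigma,\Delta)$-compatibility, and the $\Sigma$-skew Armendariz condition as opposed to its weak variant), and that the hypothesis of bijectivity or of a base field, if present in either source, is either harmless here or already implied by our setting. Once this translation is verified, the two invocations close the argument in one line each.
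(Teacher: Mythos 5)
Your proposal matches the paper exactly: the paper gives no separate proof and simply states that the proposition follows directly from \cite{LouzariReyes2020}, Theorem 3.9 (for the 2-primal case) and \cite{ReyesSuarez2019Radicals}, Proposition 4.4 (for the locally finite case), with the passage from 2-primal or weakly 2-primal to NI being exactly the radical-inclusion chain you cite. Your additional bookkeeping remarks about matching terminology are sensible but not part of the paper's argument.
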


\begin{proposition}\label{prop-NIskewPBWimplRNI}
If $A=\sigma(R)\langle x_1, \dots, x_n\rangle$ is an NI skew PBW
extension over $R$, then $R$ is an NI ring, $N(R)\subseteq J(R)$,
$N(A)\subseteq J(A)$, and therefore $N(R)\subseteq J(A)$.
\end{proposition}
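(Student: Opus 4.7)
The proof should be essentially a short chain of inclusions, so my plan is to lay out the four assertions in the natural order and reduce each to a fact already recorded in the Preliminaries.

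First, I would show that $R$ is NI. By Definition \ref{def.skewpbwextensions}(i), $R$ is a subring of $A$ sharing the same identity, so $R$ is in particular a (unital) subring of the NI ring $A$. Proposition \ref{prop-DefeqNI}(iii) then applies directly and yields that $R$ is an NI ring. This is the easiest step and needs no computation beyond citing the characterization.

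Next I would establish the two inclusions $N(R)\subseteq J(R)$ and $N(A)\subseteq J(A)$ simultaneously, using the fact that these hold for any NI ring. For any NI ring $T$, the definition gives $N(T)=N^{*}(T)$, and the chain $N^{*}(T)\subseteq J(T)$ is part of the standard containments recalled in Section \ref{section-prelim} (i.e., the upper nilradical is a nil ideal and thus lies in the Jacobson radical). Applying this to $T=R$ (which is NI by the first step) and to $T=A$ (which is NI by hypothesis), I obtain both inclusions at once.

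Finally, to get the conclusion $N(R)\subseteq J(A)$, I would observe that any element of $R$ that is nilpotent in $R$ is also nilpotent in $A$, so $N(R)\subseteq N(A)$. Combined with $N(A)\subseteq J(A)$ from the previous step, this yields $N(R)\subseteq N(A)\subseteq J(A)$, finishing the proof.

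There is no real obstacle here; the statement is a direct compilation of Proposition \ref{prop-DefeqNI}(iii), the definition of NI, and the universal containment of nil ideals in the Jacobson radical. The only point worth stating carefully is that the NI property transfers to the subring $R$ with the same identity, which is exactly what Proposition \ref{prop-DefeqNI}(iii) guarantees.
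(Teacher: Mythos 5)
Your proposal is correct and follows essentially the same route as the paper's proof: deduce that $R$ is NI from Proposition \ref{prop-DefeqNI}(iii) applied to the subring $R$ of $A$, note that for an NI ring the set of nilpotent elements is a nil ideal and hence contained in the Jacobson radical, and conclude via $N(R)\subseteq N(A)\subseteq J(A)$. No substantive differences.
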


\begin{proof}
From Definition \ref{def.skewpbwextensions} (i) we know that $R$ is a
subring of $A$. Since $A$ is NI,  Proposition
\ref{prop-DefeqNI} implies that $R$ is NI. Now, since the Jacobson
radical of a ring contains the nil ideals, by Proposition
\ref{prop-DefeqNI} we have that $N(A)$ and $N(R)$ are nil ideals of
$A$ and $R$, respectively. Hence, $N(R)\subseteq J(R)$ and
$N(A)\subseteq J(A)$. As $N(R)\subseteq N(A)$, then $N(R)\subseteq
J(A)$.
\end{proof}

From Propositions \ref{prop-skewPBWcompNI} and \ref{prop-NIskewPBWimplRNI} we deduce that if $R$ is
locally finite, $(\Sigma, \Delta)$-compatible and $\Sigma$-skew
Armendariz, then $R$ is NI. This result has been proved by Reyes and Su\'arez \cite{ReyesSuarez2019Radicals}, Theorem
4.3.

Since weak $(\Sigma, \Delta)$-compatible rings are more general than $(\Sigma, \Delta)$-compatible rings, and NI rings are more general than 2-primal rings, the following theorem generalizes \cite{HashemiKhalilAlhevaz2019}, Theorem 4.11, and some other results of \cite{HashemiKhalilAlhevaz2019} and \cite{LouzariReyes2020} formulated for skew PBW extensions over 2-primal $(\Sigma, \Delta)$-compatible rings.

\begin{theorem}\label{teo- RweakCompNIiff A NI}
If $A=\sigma(R)\langle x_1, \dots, x_n \rangle$ is a skew PBW extension over a weak $(\Sigma, \Delta)$-compatible ring $R$, then $R$ is NI if and only if $A$ is NI.
\end{theorem}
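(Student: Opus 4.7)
The backward direction $(\Leftarrow)$ is immediate from Proposition \ref{prop-NIskewPBWimplRNI}, which requires no compatibility hypothesis. For the forward direction, the plan is to establish the identity $N(A) = N(R)\langle x_1,\dots, x_n\rangle$; once this is in hand, the right-hand side is an ideal of $A$ (by \cite{AcostaLezamaReyes}, Proposition 2.6) and so $A$ will be NI by Proposition \ref{prop-DefeqNI}(ii).

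First, I would verify that $N(R)$ is a $(\Sigma,\Delta)$-ideal of $R$. Setting $a=1$ in the weak $\Sigma$-compatibility biconditional yields $b \in N(R) \iff \sigma^{\alpha}(b) \in N(R)$, which forces $\sigma_i^{-1}(N(R)) = N(R)$; setting $a=1$ in the weak $\Delta$-compatibility implication yields $b\in N(R)\Rightarrow \delta^{\beta}(b)\in N(R)$, giving $\delta_i(N(R)) \subseteq N(R)$. Since $R$ is NI, Proposition \ref{prop-DefeqNI}(ii) already tells us $N(R)$ is a two-sided ideal, so \cite{AcostaLezamaReyes}, Proposition 2.6, ensures $N(R)\langle x_1,\dots, x_n\rangle$ is an ideal of $A$.

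Next, I would prove the inclusion $N(A) \subseteq N(R)\langle x_1,\dots,x_n\rangle$ by a quotient argument. The quotient $\bar R := R/N(R)$ is reduced by Proposition \ref{prop-DefeqNI}(v), and weak $(\Sigma,\Delta)$-compatibility of $R$ descends to genuine $(\bar\Sigma,\bar\Delta)$-compatibility of $\bar R$: if $\bar a\bar b = 0$, then $ab\in N(R)$, which by weak compatibility is equivalent to $a\sigma^{\alpha}(b)\in N(R)$, i.e., $\bar a\bar \sigma^{\alpha}(\bar b)=0$, and analogously for $\Delta$. A reduced $(\Sigma,\Delta)$-compatible ring is $\Sigma$-rigid, so by the standard results for skew PBW extensions over $\Sigma$-rigid rings, $\sigma(\bar R)\langle \bar x_1,\dots,\bar x_n\rangle$ is reduced. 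The natural isomorphism $A/N(R)\langle x_1,\dots,x_n\rangle \cong \sigma(\bar R)\langle \bar x_1,\dots,\bar x_n\rangle$ then forces $N(A)\subseteq N(R)\langle x_1,\dots,x_n\rangle$.

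The reverse inclusion $N(R)\langle x_1,\dots,x_n\rangle \subseteq N(A)$---that is, that $N(R)\langle x_1,\dots,x_n\rangle$ is a nil ideal---is where I expect the main obstacle to lie. Given $f=\sum_i a_i X_i$ with $a_i\in N(R)$, the $(\Sigma,\Delta)$-invariance of $N(R)$ keeps every coefficient of every power $f^m$ inside the nil ideal $N(R)$; the subtlety is that weak compatibility controls only $N(R)$-membership and not equality to zero, so driving the coefficients of $f^m$ to vanish for some $m$ requires care. My strategy would be to iterate the weak $\Sigma$-compatibility biconditional to reduce the leading coefficient of $f^m$ modulo $N(R)$ to a product equivalent to $a_{i_0}^m$, where $X_{i_0}$ is the top monomial of $f$, exploit the nilpotence of $a_{i_0}$ to annihilate this leading coefficient, and then propagate vanishing down to the lower coefficients by induction on the monomial support of $f$. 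Once this step is secured, the two inclusions combine to give $N(A) = N(R)\langle x_1,\dots,x_n\rangle$, an ideal of $A$, and hence $A$ is NI by Proposition \ref{prop-DefeqNI}(ii).
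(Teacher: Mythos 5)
Your overall skeleton coincides with the paper's: the backward direction via Proposition \ref{prop-NIskewPBWimplRNI}, the verification that $N(R)$ is a $(\Sigma,\Delta)$-invariant ideal (the paper gets $\sigma_i(N(R))\subseteq N(R)$ from injectivity of $\sigma_i$ alone, you get it from weak $\Sigma$-compatibility with $a=1$; both are fine), and the reduction of the whole problem to the identity $N(A)=N(R)\langle x_1,\dots,x_n\rangle$. Your quotient argument for the inclusion $N(A)\subseteq N(R)\langle x_1,\dots,x_n\rangle$ is also sound and essentially reproduces what the paper does inside the proof of Theorem \ref{teo.generteo3.1}: weak compatibility descends to genuine compatibility on the reduced ring $R/N(R)$, which is then $\overline{\Sigma}$-rigid, so the quotient extension is reduced.

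The genuine gap is the reverse inclusion $N(R)\langle x_1,\dots,x_n\rangle\subseteq N(A)$, which you explicitly leave as a sketch. This is not a routine verification: knowing that every coefficient of every power $f^m$ stays in the nil ideal $N(R)$ does not make $f$ nilpotent, and the general assertion ``$I$ nil implies $I\langle x_1,\dots,x_n\rangle$ nil'' is false without further hypotheses (already for ordinary polynomial rings this is of K\"othe type). Your proposed leading-coefficient induction is also shaky on its own terms, since annihilating the top coefficient of $f^m$ for one $m$ does not set up an induction on the support of $f$ --- the lower-order part of $f^m$ is not a power of a shorter polynomial. The paper avoids all of this by invoking \cite{ReyesSuarez2019-2}, Theorem 4.6, which states precisely that over a weak $(\Sigma,\Delta)$-compatible NI ring, $f\in N(A)$ if and only if all coefficients of $f$ lie in $N(R)$; that citation delivers both inclusions at once and is where the weak compatibility hypothesis does its real work. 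To complete your argument you would either have to cite that result as well or reprove its hard direction, which your sketch does not do.
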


\begin{proof} Suppose that $R$ is an NI ring.
Let us first see that $N(R)$ is a $(\Sigma,\Delta)$-invariant. By Proposition \ref{prop-DefeqNI}, $N(R)$ is an ideal of
$R$. For a fixed $i$, if $\sigma_i(r)\in \sigma_i(N(R))$, where $r\in N(R)$, then $\sigma_i(r^k)=\sigma_i(r)^k=0$, for some positive integer $k$. Thus $\sigma_i(r)\in N(R)$, i.e.,
$\sigma_i(N(R))\subseteq N(R)$. Now, for $r\in N(R)$, $\delta_i(r)\in N(R)$, since $R$ is weak
$(\Sigma, \Delta)$-compatible, whence $\delta_i(N(R))\subseteq N(R)$. Since
$N(R)$ is a $(\Sigma,\Delta)$-invariant ideal or $R$, 
\cite{AcostaLezamaReyes}, Proposition 2.6 (i), implies that $N(R)\langle x_1, \dots, x_n \rangle$ is an ideal of $A$. 

Let us show that $N(R)\langle x_1, \dots, x_n \rangle=N(A)$. From \cite{ReyesSuarez2019-2}, Theorem 4.6, $f=\sum_{i=0}^ta_iX_i\in
N(R)\langle x_1, \dots, x_n \rangle$ if and only if $a_i\in N(R)$, for $0\leq i\leq t$, if and only if $f=\sum_{i=0}^ta_iX_i\in N(A)$.
Therefore, $N(A)$ is an ideal of $A$, and by Proposition
\ref{prop-DefeqNI}, we have that $A$ is an NI ring. 

Conversely, if $A$ is an NI ring, then by Proposition \ref{prop-NIskewPBWimplRNI}
we have that $R$ is an NI ring.
\end{proof}

If $A=\sigma(R)\langle x_1,\dots,x_n\rangle$ is a  skew PBW extension over a $(\Sigma,\Delta)$-compatible ring $R$, then $N(R)$ is $\Sigma$-rigid. Indeed, for $r\in R$ and $\alpha\in
\mathbb{N}^n$, if $r\sigma^{\alpha}(r)\in N(R)$, then \cite{ReyesSuarez2019-1}, Lemma 2, implies that $r^2\in N(R)$, whence $r\in N(R)$.

For the next result, recall that a ring $R$ is said to be {\em Dedekind finite} if $ab=1$ implies $ba=1$, where $a,b\in R$.

\begin{proposition}\label{prop-varios}
If $A=\sigma(R)\langle x_1,\dots,x_n\rangle$ is an NI skew PBW extension over $R$, then:
\begin{enumerate}
\item[\rm (i)] $N(R)$ and $N(A)$ are completely semiprime.
\item[\rm (ii)] $d_{i,j}$ in Definition \ref{def.skewpbwextensions} are
units. In general, left {\rm (}resp. right{\rm )} invertible elements in $A$ are
units.
\end{enumerate}
\end{proposition}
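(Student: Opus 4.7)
The plan is to cascade the NI hypothesis from $A$ down to $R$ via Proposition \ref{prop-NIskewPBWimplRNI} and then exploit the characterizations in Proposition \ref{prop-DefeqNI}. Part (i) is then a short bookkeeping step; for (ii), the main and only real difficulty is to prove that every NI ring is Dedekind finite, after which the freeness of $A$ as a left $R$-module pins $d_{i,j}$ down as a unit.

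For (i), since $A$ is NI, Proposition \ref{prop-NIskewPBWimplRNI} yields that $R$ is NI, and Proposition \ref{prop-DefeqNI} gives that both $N(R)$ and $N(A)$ are ideals. If $a\in R$ with $a^{2}\in N(R)$, then $a^{2k}=(a^{2})^{k}=0$ for some $k\geq 1$, so $a\in N(R)$; the same computation in $A$ handles $N(A)$.

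For the \emph{general} clause of (ii), suppose $fg=1$ in $A$. A direct calculation gives $(gf)^{2}=g(fg)f=gf$, so $e:=1-gf$ is idempotent and $fe=0$. Passing to $\bar{A}=A/N(A)$, which is reduced by Proposition \ref{prop-DefeqNI}(v), idempotents are central, so $\bar{e}\bar{f}=\bar{f}\bar{e}=0$; this forces $\bar{f}=\bar{g}\bar{f}\bar{f}$ and then $\bar{1}=\bar{f}\bar{g}=\bar{g}\bar{f}$. Hence $e\in N(A)$, and being simultaneously nilpotent and idempotent it must be zero, so $gf=1$. The right-invertible case is symmetric. Running the same argument in $R$ shows that $R$ itself is Dedekind finite, a fact I reuse immediately below.

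For the $d_{i,j}$ clause of (ii), Definition \ref{def.skewpbwextensions}(iv) is stated for every ordered pair $(i,j)$. When $i=j$, comparing the coefficient of the basis element $x_{i}^{2}$ in $(1-d_{i,i})x_{i}^{2}\in R+Rx_{1}+\cdots+Rx_{n}$ forces $d_{i,i}=1$. When $i\neq j$, I may assume $i<j$ so that $x_{i}x_{j}\in{\rm Mon}(A)$. Applying (iv) to the pairs $(i,j)$ and $(j,i)$ gives
\[
x_{j}x_{i}=d_{i,j}\,x_{i}x_{j}+p_{1},\qquad x_{i}x_{j}=d_{j,i}\,x_{j}x_{i}+p_{2},
\]
with $p_{1},p_{2}\in R+Rx_{1}+\cdots+Rx_{n}$. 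Substituting the first into the second produces $x_{i}x_{j}=d_{j,i}d_{i,j}\,x_{i}x_{j}+(d_{j,i}p_{1}+p_{2})$, whose parenthesized tail is still of degree $\leq 1$. Uniqueness of the $R$-basis ${\rm Mon}(A)$ equates the coefficients of $x_{i}x_{j}$, giving $d_{j,i}d_{i,j}=1$; hence $d_{i,j}$ is left invertible in $R$, and the Dedekind finiteness of $R$ upgrades this to a two-sided inverse $d_{j,i}$. Thus each $d_{i,j}$ is a unit, as claimed.
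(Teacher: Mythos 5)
Your proof is correct and follows the same route as the paper: part (i) is identical, and part (ii) likewise reduces everything to Dedekind finiteness of $R$ and $A$. The only difference is that where the paper cites Hwang et al.\ (Proposition 2.7(1)) for the fact that NI rings are Dedekind finite, and Gallego--Lezama (Remark 2(iii)) for the identity $d_{j,i}d_{i,j}=1$, you prove both facts inline (via idempotents in the reduced quotient $A/N(A)$, and via comparison of coefficients in the PBW basis, respectively), and both of your arguments are sound.
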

\begin{proof}

\begin{enumerate}
\item[(i)] Since $A$ and $R$ are NI rings, then $N(R)$ is an ideal
of $R$ and $N(A)$ is an ideal of $A$. If $r^2\in N(R)$ then
$(r^2)^k=r^{2k}=0$, for some positive integer $k$, and so $r\in N(R)$.
Analogously, if $f^2\in N(A)$ then $f\in N(A)$. Therefore, $R$ and
$A$ are completely semiprime.
\item[(ii)] By \cite{LezamaGallego}, Remark 2 (iii), for every $1\leq i <
j\leq n$, $d_{i,j}$ has a left inverse and $d_{j,i}$ has a right
inverse. In this way, there exist elements $r$, $s\in R$ such that
$rd_{i,j}=1=d_{j,i}s$. Since $R$ is an NI ring, then by
\cite{Hwang2006}, Proposition 2.7 (1), $R$ is Dedekind finite, whence $d_{i,j}r=1=sd_{j,i}$. Now, if $f\in A$ is left (resp. right)
invertible, then $gf=1$ (resp. $fh=1$), for some $g,h\in A$. Since
$A$ is NI, $A$ is Dedekind finite and so $fg=1$ (resp.
$hf=1$).
\end{enumerate}
\end{proof}

\begin{remark}
In \cite{ReyesRodriguez2019}, Theorem 3.14, it was presented a relation between skew PBW extensions of endomorphism type and the notion of Dedekind finite by considering a skew notion of McCoy ring.
\end{remark}

\begin{proposition}\label{prop-Deltaideal}
Let $A=\sigma(R)\langle x_1,\dots,x_n\rangle$ be a skew PBW
extension of derivation type and $I\subseteq R$. Then $I\langle
x_1,\dots,x_n\rangle$ is an ideal of $A$ if and only if $I$ is a
$\Delta$-invariant {\rm (}and therefore $(\Sigma,\Delta)$-invariant{\rm )} ideal of $R$.
\end{proposition}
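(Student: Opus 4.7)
The plan is to prove both implications separately, with the backward direction being essentially immediate from a result already cited, and the forward direction requiring only a short calculation using the defining relation of a derivation-type extension.

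For the converse direction, I would observe that since $A$ is of derivation type, we have $\sigma_i=\mathrm{id}_R$ for every $i$, so $\sigma_i(I)=I$ automatically, and hence any $\Delta$-invariant ideal $I$ of $R$ is automatically $(\Sigma,\Delta)$-invariant. Then the conclusion that $I\langle x_1,\dots,x_n\rangle=IA$ is an ideal of $A$ follows directly from \cite{AcostaLezamaReyes}, Proposition 2.6 (i). This also justifies the parenthetical claim in the statement.

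For the forward direction, suppose $I\langle x_1,\dots,x_n\rangle$ is an ideal of $A$. I would first argue that $I$ is an ideal of $R$: since $A$ is left free over $R$ with basis $\mathrm{Mon}(A)$, an element of $A$ lies in $R$ exactly when all non-constant coefficients vanish, so $I\langle x_1,\dots,x_n\rangle \cap R=I$; consequently the closure of $I\langle x_1,\dots,x_n\rangle$ under addition and under two-sided multiplication by elements of $R\subseteq A$ forces the same properties for $I$. Next, fix $a\in I$ and $1\leq i\leq n$. Because $A$ is of derivation type, Proposition \ref{sigmadefinition} gives
\[
x_ia=\sigma_i(a)x_i+\delta_i(a)=ax_i+\delta_i(a).
\]
Since $a\in I\subseteq I\langle x_1,\dots,x_n\rangle$ and the latter set is an ideal of $A$, both $x_ia$ and $ax_i$ lie in $I\langle x_1,\dots,x_n\rangle$, hence so does $\delta_i(a)=x_ia-ax_i$. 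But $\delta_i(a)\in R$, so $\delta_i(a)\in I\langle x_1,\dots,x_n\rangle\cap R=I$, proving $\delta_i(I)\subseteq I$. Thus $I$ is $\Delta$-invariant, and as noted above it is then automatically $(\Sigma,\Delta)$-invariant.

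There is no real obstacle here: the only substantive point is the identification $I\langle x_1,\dots,x_n\rangle\cap R=I$, which is a direct consequence of the freeness in Definition \ref{def.skewpbwextensions}(ii), and the commutator trick $\delta_i(a)=x_ia-ax_i$, which is special to the derivation-type hypothesis. The result would fail without derivation type, since in general $x_ia-ax_i=(\sigma_i(a)-a)x_i+\delta_i(a)$ is not in $R$, which is why the general statement must go through the full $(\Sigma,\Delta)$-invariance machinery.
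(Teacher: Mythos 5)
Your proof is correct and follows essentially the same route as the paper's: the backward implication is delegated to \cite{AcostaLezamaReyes}, Proposition 2.6 (i) after noting that derivation type makes $\Delta$-invariance imply $(\Sigma,\Delta)$-invariance, and the forward implication uses the commutator identity $\delta_i(a)=x_ia-ax_i\in I\langle x_1,\dots,x_n\rangle\cap R=I$. The only difference is that you spell out more explicitly why $I\langle x_1,\dots,x_n\rangle\cap R=I$ and why $I$ is an ideal of $R$, details the paper leaves implicit.
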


\begin{proof}
If $I\langle x_1,\dots,x_n\rangle$ is an ideal of $A$, then
$I$ is an ideal of $R$. Let $\delta_i(r)\in \delta_i(I)$ such that
$r\in I$. Then $x_ir=rx_i+\delta_i(r)\in I\langle
x_1,\dots,x_n\rangle$, for each $1\leq i\leq n$. As $-rx_i\in
I\langle x_1,\dots,x_n\rangle$ then $\delta_i(r)\in I\langle
x_1,\dots,x_n\rangle$; in particular, $\delta_i(r)\in I$. This means that $I$ is
a $\Delta$-invariant ideal. Since $A$ is of derivation type, $I$ is a $\Sigma$-invariant ideal. 

The converse follows from \cite{AcostaLezamaReyes}, Proposition 2.6 (i).
\end{proof}

\begin{proposition}\label{prop-derivtype}
If $A=\sigma(R)\langle x_1,\dots,x_n\rangle$ is a skew PBW
extension of derivation type over $R$, then the following assertions hold:
\begin{enumerate}
\item[\rm (i)] $N(R)$ is $\Sigma$-rigid.
\item[\rm (ii)] If $A$ is NI then $N(R)$ and $N^{*}(R)$ are $\Sigma$-rigid ideals.
\item[\rm (iii)] For every completely prime $P$ of $A$, $P\cap R$ is a
completely prime ideal of $R$.
\item[\rm (iv)] $A$ is NI if and only if $N(R)$ is a $\Delta$-invariant ideal of $R$ and $N(A)=N(R)\langle x_1,\dots,
x_n\rangle$.
\end{enumerate}
\end{proposition}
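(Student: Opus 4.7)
The plan is to handle the four items separately, exploiting that in derivation type we have $\sigma_i=\mathrm{id}_R$, which collapses any $\Sigma$-condition into a statement about ordinary powers.

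For (i), since $\sigma^{\alpha}=\mathrm{id}_R$ for every $\alpha\in\mathbb{N}^{n}$, the condition $r\sigma^{\alpha}(r)\in N(R)$ reduces to $r^{2}\in N(R)$, which forces $r\in N(R)$ because nilpotence of $r^{2}$ gives nilpotence of $r$. For (ii), if $A$ is NI then Proposition \ref{prop-NIskewPBWimplRNI} yields that $R$ is NI, so $N(R)$ is an ideal; moreover, $N(R)$ being a nil ideal equals the upper nilradical $N^{*}(R)$, and both are $\Sigma$-rigid by (i). For (iii), the canonical inclusion $R\hookrightarrow A$ induces an embedding $R/(P\cap R)\hookrightarrow A/P$; since $A/P$ is a domain, so is any subring, whence $P\cap R$ is completely prime.

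The bulk of the work is (iv). For the ($\Leftarrow$) direction, I would invoke Proposition \ref{prop-Deltaideal}: a $\Delta$-invariant ideal $I$ of $R$ produces an ideal $I\langle x_1,\dots,x_n\rangle$ of $A$, so the assumption makes $N(A)=N(R)\langle x_1,\dots,x_n\rangle$ an ideal, and by Proposition \ref{prop-DefeqNI} this means $A$ is NI. For the ($\Rightarrow$) direction, assume $A$ is NI. Then $R$ is NI (Proposition \ref{prop-NIskewPBWimplRNI}), so $N(R)$ is an ideal; to get $\Delta$-invariance, take $r\in N(R)\subseteq N(A)$ and observe $x_{i}r=rx_{i}+\delta_{i}(r)\in N(A)$, hence $\delta_{i}(r)=x_{i}r-rx_{i}\in N(A)\cap R$. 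A quick check that $N(A)\cap R=N(R)$ (using that $R\hookrightarrow A$ preserves powers) then gives $\delta_{i}(r)\in N(R)$, so $N(R)$ is $\Delta$-invariant.

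It remains to prove $N(A)=N(R)\langle x_1,\dots,x_n\rangle$. The inclusion $\supseteq$ is immediate: each coefficient $a_{i}\in N(R)\subseteq N(A)$, and since $N(A)$ is an ideal, $a_{i}X_{i}\in N(A)$. For $\subseteq$, I would argue by induction on the total degree of $f=\sum_{i=0}^{m}a_{i}X_{i}\in N(A)$, ordered deg-lex. By Proposition \ref{prop-varios}(ii), the constants $d_{i,j}$ are units; combined with $\sigma_{i}=\mathrm{id}_{R}$, a direct computation in the PBW normal form shows that the leading term of a product $aX_{\alpha}\cdot bX_{\beta}$ is $ab\,u_{\alpha,\beta}X_{\alpha+\beta}$ for some unit $u_{\alpha,\beta}\in R$ (a product of $d_{i,j}$'s). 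Iterating, the leading term of $f^{k}$ is $a_{m}^{k}\,u_{k}\,X_{km}$ with $u_{k}$ a unit, so $f^{k}=0$ forces $a_{m}^{k}=0$; hence $a_{m}\in N(R)$. Then $a_{m}X_{m}\in N(A)$, so $f-a_{m}X_{m}\in N(A)$ has strictly smaller degree, and induction finishes the argument.

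The main obstacle will be the leading-coefficient step in the ($\Rightarrow$) direction of (iv): one must verify carefully that, in the general PBW multiplication rules of a skew PBW extension of derivation type, the top-degree contribution to $f^{k}$ is indeed $a_{m}^{k}$ multiplied by a unit coming from commuting the $x$'s past each other. The derivation assumption is what rules out $\sigma$-twists on the coefficient $a_{m}$, and Proposition \ref{prop-varios}(ii), which makes the $d_{i,j}$ units in NI extensions, is what lets us cancel the unit factor and conclude $a_{m}^{k}=0$.
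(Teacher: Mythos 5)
Items (i)--(iii), the $(\Leftarrow)$ direction of (iv), the $\Delta$-invariance argument, and the inclusion $N(R)\langle x_1,\dots,x_n\rangle\subseteq N(A)$ all match the paper's proof (for (iii) you give a direct ``subring of a domain is a domain'' argument where the paper cites \cite{NinoReyes2020}, Theorem 1; either is fine). The genuine gap is exactly the step you flag as the main obstacle: the inclusion $N(A)\subseteq N(R)\langle x_1,\dots,x_n\rangle$. In derivation type one has ${\rm lc}(aX^{\alpha}\cdot bX^{\beta})=ab\,c_{\alpha,\beta}$ with the unit $c_{\alpha,\beta}$ (a product of $d_{i,j}$'s) appearing \emph{to the right} of $ab$; iterating, the coefficient of the top monomial of $f^{k}$ is not $a_m^{k}u_k$ for a single unit $u_k$ but an interleaved product of the shape $a_m^{2}c_1\,a_m\,c_2\cdots a_m\,c_{k-1}$, and the units $c_i$ need not commute with $a_m$. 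You cannot ``cancel the unit factor'' from the middle of a product: $a\,u\,a=0$ with $u$ a unit does not imply $a^{2}=0$ in a noncommutative ring (take $a=E_{11}$ and $u=E_{12}+E_{21}$ in $M_2(\mathbb{K})$: then $aua=0$ but $a^{2}=a\neq 0$). So the conclusion $a_m^{k}=0$, hence $a_m\in N(R)$, does not follow as written, and the deg-lex induction stalls at its very first step.

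The step is repairable, but it needs an extra idea rather than a cancellation. Since $A$ is NI, $R$ is NI, so $R/N(R)$ is reduced; a reduced ring embeds into a product of domains (its minimal primes are completely prime and intersect to zero), and in a domain a vanishing product whose factors are either $\overline{a_m}$ or units forces $\overline{a_m}=0$. Reducing the coefficient identity modulo $N(R)$ therefore gives $a_m\in N(R)$, after which your induction on the leading monomial goes through. The paper sidesteps all of this by quoting \cite{HashemiKhalilAlhevaz2019}, Proposition 4.1, for $N(A)\subseteq N(R)\langle x_1,\dots,x_n\rangle$. As submitted, however, your leading-coefficient claim is false and the argument for $a_m\in N(R)$ is incomplete.
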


\begin{proof}

\begin{enumerate}
\item [\rm (i)] If $r\in R$ satisfies $r^2= r\sigma^{\alpha}(r)\in N(R)$,
then $r\in N(R)$, i.e., $N(R)$ is $\Sigma$-rigid.
\item [\rm (ii)] If $A$ is NI, then $R$ is NI and therefore $N(R)$ is an ideal
of $R$. By (i), $N(R)$ is $\Sigma$-rigid, so $N(R)$ is a $\Sigma$-rigid ideal. Since $N(R)= N^{*}(R)$, then $N^{*}(R)$ is a $\Sigma$-rigid ideal.
\item [\rm (iii)] From \cite{NinoReyes2020}, Theorem 1, for every completely prime ideal $P$ of
$A$, $P\cap R$ is a completely prime ideal of $R$.
\item [\rm (iv)]If $A$ is NI, then by  Proposition \ref{prop-NIskewPBWimplRNI} $R$ is NI, and so $N(R)$ is an ideal of $R$. Let $\delta_i(r)\in \delta_i(N(R))$ such that $r\in N(R)\subseteq N(A)$.
Since $A$ is NI, $N(A)$ is an ideal of $A$ and $x_ir=rx_i+\delta_i(r)\in N(A)$, for each $1\leq i\leq n$. As
$-rx_i\in N(A)$, then $\delta_i(r)\in N(A)\cap R=N(R)$, which means that $N(R)$ is
a $\Delta$-invariant ideal. From \cite{HashemiKhalilAlhevaz2019}, Proposition 4.1, $N(A)\subseteq N(R)\langle x_1, \dots, x_n \rangle$. For the another inclusion, let $f=a_0+a_1X_1+\dots+ a_tX_t\in N(R)\langle
x_1,\dots,x_n\rangle$, with  $a_i\in N(R)\subseteq N(A)$, for $0\leq
i\leq t$. Since $N(A)$ is an ideal of $A$, $a_0, a_1X_1, \dots,
a_tX_t\in N(A)$, and so $f=a_0+a_1X_1+\dots+ a_tX_t\in N(A)$.

Conversely, if $N(R)$ is a  $\Delta$-invariant ideal, Proposition \ref{prop-Deltaideal} guarantees that $N(A) = N(R)\langle
x_1, \dots, x_n \rangle$ is an ideal of $A$. Proposition
\ref{prop-DefeqNI} implies that $A$ is an NI ring.
\end{enumerate}
\end{proof}

The following result is one of the most important in the paper. This extends Nasr-Isfahani \cite{Nasr-Isf2015}, Theorem 3.1.

\begin{theorem}\label{teo.generteo3.1}
If $A = \sigma(R)\langle x_1,\dotsc, x_n\rangle$ is a skew PBW extension over a ring $R$, then the following statements are equivalent:
\begin{enumerate}
\item[\rm (i)] $A$ is  NI and $N(R)$ is $\Sigma$-rigid.
\item[\rm (ii)] $N(R)$ is a $\Sigma$-ideal of $R$ and $N(A) =
N(R)\langle x_1, \dots, x_n \rangle$.
\item[\rm (iii)] $N(R)$ is a $\Sigma$-rigid ideal of $R$ and $N^{*}(A)= N^{*}(R)\langle x_1, \dots, x_n \rangle$.
\end{enumerate}
\end{theorem}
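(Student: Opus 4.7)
The plan is to prove the cycle (i) $\Rightarrow$ (ii) $\Rightarrow$ (iii) $\Rightarrow$ (i). The engine throughout will be $\Sigma$-rigidity of $N(R)$: once $N(R)$ is known to be an ideal, $\Sigma$-rigidity allows one to pass from nilpotence of an element $f\in A$ to nilpotence of each of its coefficients by inspecting the leading monomial of $f^k$.

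For (i) $\Rightarrow$ (ii): Proposition~\ref{prop-NIskewPBWimplRNI} gives that $R$ is NI, so $N(R)$ is an ideal. The inclusion $\sigma_i(N(R))\subseteq N(R)$ is automatic because $\sigma_i$ is a ring endomorphism; conversely, if $\sigma_i(r)\in N(R)$ then $r\sigma_i(r)\in N(R)$, and $\Sigma$-rigidity (applied with $\alpha=e_i$) forces $r\in N(R)$, so $\sigma_i^{-1}(N(R))=N(R)$, which by the equivalent characterization of $\Sigma$-ideals noted at the end of Section~\ref{section-prelim} is the $\Sigma$-ideal condition. For the equality $N(A)=N(R)\langle x_1,\dots,x_n\rangle$, the inclusion $\supseteq$ is immediate from $N(R)\subseteq N(A)$ and $N(A)$ being an ideal; for the reverse, write $f=a_0+a_1X_1+\cdots+a_mX_m\in N(A)$ with $a_m\neq 0$ and $X_m=x^{\alpha_m}$, observe that the leading monomial of $f^k$ contributes the coefficient $a_m\sigma^{\alpha_m}(a_m)\cdots\sigma^{(k-1)\alpha_m}(a_m)$ (up to a unit coming from the $d_{i,j}$'s), which must vanish; iterating $\Sigma$-rigidity then yields $a_m\in N(R)$, and induction on $\deg f$ takes care of the remaining coefficients.

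For (ii) $\Rightarrow$ (iii): a $\Sigma$-ideal is $\Sigma$-invariant. To recover $\Sigma$-rigidity, suppose $r\sigma^\alpha(r)\in N(R)$; computing $(rx^\alpha)^2=r\sigma^\alpha(r)x^{2\alpha}+(\text{lower-order terms})$, the Leibniz rule combined with $\Sigma$-invariance ensures every lower-order coefficient lies in $N(R)$, whence $(rx^\alpha)^2\in N(R)\langle x_1,\dots,x_n\rangle=N(A)$; since this set is nil, $rx^\alpha\in N(A)=N(R)\langle x_1,\dots,x_n\rangle$, forcing $r\in N(R)$. Then $A$ is NI, so $N^*(A)=N(A)$ and $N^*(R)=N(R)$, giving the claimed equality of upper nilradicals. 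For (iii) $\Rightarrow$ (i): the same leading-coefficient argument as in (i) $\Rightarrow$ (ii), invoked now under the hypothesis that $N(R)$ is a $\Sigma$-rigid ideal, yields $N(A)\subseteq N(R)\langle x_1,\dots,x_n\rangle=N^*(R)\langle x_1,\dots,x_n\rangle=N^*(A)$, so $N(A)=N^*(A)$ is an ideal and $A$ is NI; $\Sigma$-rigidity of $N(R)$ is already part of the hypothesis.

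The step I expect to be the main obstacle is the inclusion $N(A)\subseteq N(R)\langle x_1,\dots,x_n\rangle$ in the skew PBW setting. Unlike Nasr-Isfahani's single-variable Ore case, expanding $f^k$ intertwines the structure constants $d_{i,j}$ with compositions of the $\sigma_i$'s and $\delta_i$'s; one has to identify the true leading monomial of $f^k$, verify that its coefficient is a clean product of the form $a_m\sigma^{\alpha_m}(a_m)\cdots\sigma^{(k-1)\alpha_m}(a_m)$ to which $\Sigma$-rigidity applies, and check that all subleading contributions already have coefficients in $N(R)$, since this is precisely where $\Sigma$-invariance and the ideal structure of $N(R)$ do the bookkeeping that the Ore proof carries out almost for free.
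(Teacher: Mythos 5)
Your skeleton (the cycle of implications, the role of $\Sigma$-rigidity versus the $\Sigma$-ideal condition, the easy inclusion $N(R)\langle x_1,\dots,x_n\rangle\subseteq N(A)$) matches the paper, but the step you yourself flag as the main obstacle --- $N(A)\subseteq N(R)\langle x_1,\dots,x_n\rangle$ --- is exactly the step you have not proved, and the paper does not prove it your way. The paper first shows that $N(R)$ is $(\Sigma,\Delta)$-invariant, forms the quotient $A/N(R)\langle x_1,\dots,x_n\rangle\cong\overline{\sigma}(R/N(R))\langle x_1,\dots,x_n\rangle$ via \cite{AcostaLezamaReyes}, Propositions 2.2 and 2.6, observes that $R/N(R)$ is $\overline{\Sigma}$-rigid, and then invokes \cite{ReyesSuarez2018-3}, Theorem 4.4, which says that a skew PBW extension over a $\Sigma$-rigid ring is reduced; the inclusion is then immediate. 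Your proposed direct expansion of $f^k$ is precisely the computation that this cited theorem packages: identifying the leading coefficient of $f^k$ in the presence of the structure constants $d_{i,j}$ (a priori only left-invertible before $A$ is known to be NI), controlling the subleading terms produced by relation (iv) of Definition \ref{def.skewpbwextensions}, and upgrading the two-factor rigidity condition $r\sigma^{\alpha}(r)\in N(R)\Rightarrow r\in N(R)$ to a $k$-fold product all require separate lemmas. As written, your argument defers the entire content of the theorem to an unexecuted computation, so it is not a proof.

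Two further concrete gaps. First, you never establish that $N(R)$ is $\Delta$-invariant. This is not cosmetic: $N(R)\langle x_1,\dots,x_n\rangle$ is only an ideal of $A$ when $N(R)$ is $(\Sigma,\Delta)$-invariant (\cite{AcostaLezamaReyes}, Proposition 2.6 (i)), since $x_ia=\sigma_i(a)x_i+\delta_i(a)$ can push coefficients out of $N(R)$ otherwise. Without it, the inference ``$N(A)=N(R)\langle x_1,\dots,x_n\rangle$, hence $N(A)$ is an ideal, hence $A$ is NI'' in your (ii)$\Rightarrow$(iii) and (iii)$\Rightarrow$(i) does not close, and the quotient extension is not even defined. (In the one-variable case with $\delta=0$ this is vacuous, which is why it is easy to overlook.) Second, in (ii)$\Rightarrow$(iii) your claim that every lower-order coefficient of $(rx^{\alpha})^2$ lies in $N(R)$ is unjustified: those coefficients have the form $r\,\delta^{\beta}\sigma^{\gamma}(r)$ where $r$ is exactly the element you are trying to place in $N(R)$, so neither $\Sigma$-invariance nor the ideal property of $N(R)$ applies to them. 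The paper's corresponding step instead reduces to a single $\sigma_i$, multiplies the hypothesis $r\sigma_i(r)\in N(R)$ into the ideal $N(A)=N(R)\langle x_1,\dots,x_n\rangle$, and reads off the coefficient $\sigma_i(r^2)$ of $x_i$, using that $N(R)$ is a $\Sigma$-ideal and completely semiprime.
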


\begin{proof}
(i) $\Rightarrow$ (ii) Suppose that $A$ is  NI and $N(R)$
is $\Sigma$-rigid. By Proposition \ref{prop-NIskewPBWimplRNI}, $R$ is NI and so $N(R)$ is an ideal of $R$. From Proposition \ref{sigmadefinition} we know that every $\sigma_i$ is injective, whence $r\in N(R)$ if and only if $r^{k}=0$, for some positive integer $k$,
if and only if $\sigma_i(r^k)=(\sigma_i(r))^k=0$ if and only if
$\sigma_i(r)\in N(R)$, and equivalently, $r\in \sigma_i^{-1}(N(R))$, which shows that $N(R)$ is   a $\Sigma$-ideal of $R$. 

With the aim of showing that 
$N(A)=N(R)\langle x_1, \dots, x_n \rangle$, before consider the following facts.

Note that $N(R)$ is
$\Delta$-invariant; indeed, if $r\in N(R)\subseteq N(A)$, then
$x_ir=\sigma_i(r)x_i+\delta_i(r)\in N(A)$. Since $N(R)$ is a $\Sigma$-ideal, then $\sigma_i(r)\in N(R)\subseteq N(A)$, and so
$\sigma_i(r)x_i\in N(A)$, which implies that $\delta_i(r)\in N(A)$, that
is, $\delta_i(r)\in N(R)$. By \cite{AcostaLezamaReyes}, Proposition 2.2 (i), the system of endomorphisms and
$\Sigma$-derivations $(\Sigma, \Delta)$ induces over $R/N(R)$ a system $(\overline{\Sigma}, \overline{\Delta})$ of endomorphisms and $\Sigma$-derivations defined by
$\overline{\sigma}_i(\overline{r}):= \overline{\sigma_i(r)}$ and
$\overline{\delta}_i(\overline{r}):= \overline{\delta_i(r)}$, $1
\leq i \leq n$. Since $N(R)$ is proper, \cite{AcostaLezamaReyes}, Proposition
2.6 (ii), implies that $A/N(R)\langle x_1, \dots,
x_n \rangle$ is a skew PBW extension over $R/N(R)$. Note that $R/N(R)$
is a $\overline{\Sigma}$-rigid ring, since if
$\overline{r}\overline{\sigma}^{\alpha}(\overline{r})=0$, then
$\overline{r\sigma^{\alpha}(r)}=0$, and so $r\sigma_i(r)\in N(R)$. Having in mind that $N(R)$ is
 $\Sigma$-rigid, $r\in N(R)$ and so $\overline{r}=0$. By \cite{ReyesSuarez2018-3}, Theorem 4.4, $A/N(R)\langle x_1, \dots, x_n \rangle$ is a reduced ring (cf. \cite{Fajardoetal2020}, Theorem 6.1.9).
 
Let us see that $N(A)\subseteq N(R)\langle x_1,
\dots, x_n \rangle$. If $f\in N(A)$, then $f^k=0$, for some positive
integer $k$. Thus $\overline{f^k}=\overline{f}^k=0$
 in $A/N(R)\langle x_1, \dots, x_n \rangle$, and so $\overline{f}\in
 N\left(A/N(R)\langle x_1, \dots, x_n \rangle\right)=0$. Hence $f\in N(R)\langle x_1, \dots, x_n \rangle$, that is, $N(A)\subseteq N(R)\langle x_1, \dots, x_n \rangle$.
 
For the another inclusion, if $f=r_0+r_1X_1+\cdots + r_tX_t\in
N(R)\langle x_1, \dots, x_n \rangle$, where $r_0, r_1, \dots, r_t\in
N(R)\subseteq N(A)$, since $N(A)$ is an ideal of $A$, then $r_0,
r_1X_1, \dots, r_tX_t\in N(A)$, and
therefore $f=r_0+r_1X_1+\cdots + r_tX_t\in N(A)$.

(ii) $\Rightarrow$ (i) Let $r\in N(R)\subseteq N(A)$. Then $rx_i\in
N(R)\langle x_1, \dots, x_n \rangle$, for $1\leq i\leq n$, whence $x_ir=\sigma_i(r)x_i+\delta_i(r)\in N(R)\langle x_1, \dots, x_n
\rangle=N(A)$. By assumption $N(R)$ is a $\Sigma$-ideal, so $\sigma_i(r)\in N(R)$ and thus $\delta_i(r)\in N(R)$, i.e.,
$N(R)$ is $\Delta$-invariant. Since $N(R)$ is proper, 
\cite{AcostaLezamaReyes}, Proposition 2.6 (i), implies that $N(R)\langle x_1, \dots, x_n \rangle$ is an ideal of $A$ and $AN(R)\subseteq N(R)\langle x_1, \dots, x_n \rangle$. Using that $N(R)\langle x_1, \dots, x_n \rangle=N(A)$, we obtain that $N(A)$ is an ideal
of $A$, and hence $A$ is NI. 

To show that $N(R)$ is
$\Sigma$-rigid it is enough to see that for $r\in R$ and $1\leq i\leq
n$, $r\sigma_i(r)\in N(R)$ implies that $r\in N(R)$. If for $r\in
R$, $r\sigma_i(r)\in N(R)$, $1\leq i\leq n$, then
$r\sigma_i(r)x_i\in N(A)=N(R)\langle x_1, \dots, x_n \rangle$,
$1\leq i\leq n$, and so $\sigma_i(r)x_ir=
\sigma_i(r)\sigma_i(r)x_i+\sigma_i(r)\delta_i(r)\in N(R)\langle x_1,
\dots, x_n \rangle$. Thus $\sigma_i(r^2)\in
N(R)=\sigma_i^{-1}(N(R))$, since $N(R)$ is $\Sigma$-ideal. Then
$r^2\in N(R)$ and so $r\in N(R)$.

(i) $\Rightarrow$ (iii) If $A$ is NI then $N^{*}(A)= N(A)$, and by
Proposition \ref{prop-NIskewPBWimplRNI}, $R$ is NI, i.e.,
$N^{*}(R)=N(R)$. From implication (i) $\Rightarrow$ (ii) we have that
$N(A)=N(R)\langle x_1, \dots, x_n \rangle$, and so $N^{*}(A)= N(A)=N(R)\langle x_1, \dots, x_n \rangle=N^{*}(R)\langle x_1, \dots, x_n \rangle$.

(iii) $\Rightarrow$ (ii) Suppose that  $N(R)$ is a $\Sigma$-rigid ideal of $R$. By the same argument as in the proof of (i) $\Rightarrow$ (ii), we have that $N(R)$ is a $\Sigma$-ideal of
$R$. If $r\in N(R)$, then $x_ir=\sigma_i(r)x_i+\delta_i\in
AN(R)\subseteq N(R)\langle x_1, \dots, x_n \rangle$, and since
$\sigma_i(r)\in N(R)$ then $\delta_i(r)\in N(R)$, for $1\leq i\leq n$, which shows that $N(R)$ is $\Delta$-invariant.  Using the same argument as in
the proof of (i) $\Rightarrow$ (ii), we see that $N(A)\subseteq
N(R)\langle x_1, \dots, x_n \rangle$. Since $N(R)$ is an ideal then
$R$ is NI, and so $N^{*}(R)=N(R)$. By assumption
$N^{*}(R)\langle x_1, \dots, x_n \rangle=N^{*}(A)$, whence $N(R)\langle x_1, \dots, x_n \rangle= N^{*}(R)\langle x_1, \dots,
x_n \rangle=N^{*}(A)\subseteq N(A)$.
\end{proof}

As a particular case of Theorem \ref{teo.generteo3.1}, we have the following corollary.

\begin{corollary}[\cite{Nasr-Isf2015}, Theorem 3.1]\label{cor-genteo3.1} Let $R$ be a ring, $\sigma$ an endomorphism of $R$, and $\delta$ a $\sigma$-derivation
of $R$. Then the following statements are equivalent:
\begin{enumerate}
\item[\rm (i)] $R[x; \sigma, \delta]$ is  NI and $N(R)$ is $\sigma$-rigid.
\item[\rm (ii)] $N(R)$ is a $\sigma$-ideal of $R$ and $N(R[x; \sigma,
\delta]) = N(R)[x; \sigma, \delta]$.
\item[\rm (iii)] $N(R)$ is a $\sigma$-rigid ideal of $R$ and $N^{*}(R[x; \sigma, \delta])= N^{*}(R)[x; \sigma, \delta]$.
\end{enumerate}
\end{corollary}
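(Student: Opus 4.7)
The plan is to derive this corollary directly as the one-variable case of Theorem \ref{teo.generteo3.1}. First, I would observe that the Ore extension $R[x;\sigma,\delta]$ is precisely the skew PBW extension $\sigma(R)\langle x\rangle$ in a single indeterminate: conditions (i) and (ii) of Definition \ref{def.skewpbwextensions} hold because $\{1, x, x^2, \dots\}$ is a left $R$-basis; condition (iii) follows from the defining relation $xr = \sigma(r)x + \delta(r)$, which gives $xr - \sigma(r)x = \delta(r) \in R$ (so $c_{1,r} = \sigma(r)$, nonzero whenever $r \ne 0$ since $\sigma$ is injective); and condition (iv) is vacuous for $n = 1$. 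Setting $\Sigma = \{\sigma\}$ and $\Delta = \{\delta\}$ places the situation squarely inside the framework of Theorem \ref{teo.generteo3.1}.

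Next, I would check that, for $n = 1$, each of the ring-theoretic notions appearing in Theorem \ref{teo.generteo3.1} translates into the corresponding notion in the corollary. A $\Sigma$-ideal with $\Sigma = \{\sigma\}$ is an ideal $I$ with $\sigma(I) = I$, which by the remark following the definition of $\Sigma$-ideal in Section \ref{section-prelim} coincides with Nasr-Isfahani's $\sigma$-invariant ideal (characterized by $\sigma^{-1}(I) = I$). The $\Sigma$-rigid property of $N(R)$ reduces, for $n = 1$, to the condition $r \sigma^k(r) \in N(R) \Rightarrow r \in N(R)$ for every $k \in \mathbb{N}$; and under the assumption that $N(R)$ is already known to be a $\sigma$-invariant ideal, this is equivalent to the one-step condition $r\sigma(r) \in N(R) \Rightarrow r \in N(R)$ employed by Nasr-Isfahani (the forward direction is immediate, and the iterated version follows by applying the one-step version together with $\sigma(N(R)) \subseteq N(R)$). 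Finally, the sets $N(R)\langle x\rangle$ and $N^{*}(R)\langle x\rangle$ are literally $N(R)[x;\sigma,\delta]$ and $N^{*}(R)[x;\sigma,\delta]$.

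Substituting these identifications into the three equivalent conditions of Theorem \ref{teo.generteo3.1} recovers verbatim the statements (i), (ii), (iii) of Corollary \ref{cor-genteo3.1}, so the corollary follows at once. The only point requiring genuine attention is the compatibility between the iterated rigidity condition coming from $\alpha \in \mathbb{N}$ in the multivariate setup and the single-step condition used by Nasr-Isfahani; everything else is a direct specialization. Since this compatibility is mild (as indicated above, it uses only that $N(R)$ is a $\sigma$-invariant ideal, a hypothesis explicitly present in (ii) and (iii) and derivable in (i) from $A$ being NI together with injectivity of $\sigma$), no further obstacle arises, and the proof is essentially a transcription exercise.
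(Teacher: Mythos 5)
Your proposal is correct and follows essentially the same route as the paper, which offers no separate argument for this corollary beyond observing that it is the particular case $n=1$ of Theorem \ref{teo.generteo3.1}; you simply make explicit the identification of $R[x;\sigma,\delta]$ (with $\sigma$ injective) as $\sigma(R)\langle x\rangle$ and the translation of $\Sigma$-ideal, $\Sigma$-rigid and $N(R)\langle x\rangle$ into Nasr-Isfahani's terminology. Your remark on reconciling the iterated rigidity condition ($r\sigma^{k}(r)\in N(R)$ for all $k\in\mathbb{N}$) with the one-step version is a detail the paper glosses over, and your sketch of why they agree under $\sigma$-invariance of $N(R)$ is sound.
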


\section{NJ skew PBW extensions}\label{sect-NJ}
In this section, we present the original results of the paper about NJ property for skew PBW extensions. 

We start with the following assertion that follows directly from definitions of NI and NJ rings.

\begin{proposition}\label{prop.NJskewimlRNI}
If $A = \sigma(R)\langle x_1,\dotsc, x_n\rangle$ is an NJ skew PBW extension over a ring $R$, then $R$ is NI and $J(A)\cap R= N(R)$.
\end{proposition}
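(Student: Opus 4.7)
The plan is to exploit two easy observations: every NJ ring is automatically NI, and nilpotence is preserved under passage to and from a subring. Together with the already-established Proposition \ref{prop-NIskewPBWimplRNI}, this gives both conclusions with essentially no computation.

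First I would argue that $R$ is NI. Since $A$ is NJ, we have $N(A) = J(A)$; because $J(A)$ is always a two-sided ideal of $A$, this identity forces $N(A)$ to be an ideal of $A$, and by Proposition \ref{prop-DefeqNI} (the equivalence (i)$\Leftrightarrow$(ii)) the ring $A$ is NI. Proposition \ref{prop-NIskewPBWimplRNI} then delivers that $R$ is NI.

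Next I would verify $J(A)\cap R = N(R)$. Using once more that $A$ is NJ, this reduces to checking $N(A)\cap R = N(R)$. The inclusion $N(R)\subseteq N(A)\cap R$ is immediate because $R$ is a subring of $A$ sharing the identity (Definition \ref{def.skewpbwextensions} (i)), so any $r\in R$ with $r^k=0$ in $R$ is also nilpotent in $A$. Conversely, if $r\in R$ satisfies $r^k=0$ in $A$, the multiplication in $A$ restricts to the multiplication in $R$, so $r^k=0$ in $R$ and $r\in N(R)$. Combining, $J(A)\cap R = N(A)\cap R = N(R)$.

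There is no real obstacle here: the whole argument is a two-line consequence of the definition of NJ together with Proposition \ref{prop-NIskewPBWimplRNI}. The only point one has to be explicit about is that nilpotence in the overring $A$ coincides with nilpotence in $R$ for elements of $R$, which is automatic from the subring hypothesis.
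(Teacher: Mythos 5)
Your proof is correct and is exactly the argument the paper intends: the paper offers no written proof, stating only that the proposition ``follows directly from definitions of NI and NJ rings,'' and your two observations --- that $N(A)=J(A)$ being an ideal makes $A$ (hence $R$) NI via Propositions \ref{prop-DefeqNI} and \ref{prop-NIskewPBWimplRNI}, and that $N(A)\cap R=N(R)$ because nilpotence is computed identically in the subring $R$ --- are precisely the intended details.
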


From \cite{Fajardoetal2020}, Propositions 3.2.1 and 3.2.3, it follows the next result.

\begin{proposition}\label{prop-domR impl A NJ}
Skew PBW extensions over a domain $R$ are NJ rings, and hence, NI rings.
\end{proposition}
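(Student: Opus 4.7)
The plan is to read the result off directly from the two cited foundational propositions in \cite{Fajardoetal2020}. The key observation is that when $R$ is a domain, the skew PBW extension $A$ inherits enough structure to force both $N(A)$ and $J(A)$ to vanish; once this is shown, NJ is immediate from the definition, and NI follows because an NJ ring is automatically NI.

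First, I would apply \cite{Fajardoetal2020}, Proposition 3.2.1, which I expect asserts that any skew PBW extension $A = \sigma(R)\langle x_1,\dots,x_n\rangle$ over a domain $R$ is itself a domain. This statement is natural: by Proposition \ref{sigmadefinition} each $\sigma_i$ is injective, which is precisely the ingredient needed to prevent the multiplication rule $x_ir = \sigma_i(r)x_i + \delta_i(r)$ together with the standard monomial basis ${\rm Mon}(A)$ from producing zero divisors, via the usual leading-term argument on $\deg(X)$. In particular $N(A) = \{0\}$.

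Next, I would invoke \cite{Fajardoetal2020}, Proposition 3.2.3, which I expect establishes that $J(A) = \{0\}$ in the same setting (that is, skew PBW extensions over a domain are semiprimitive). Combining these yields $N(A) = J(A) = \{0\}$, hence $A$ is NJ by definition. Finally, since in any NJ ring the set $N(A)$ coincides with the ideal $J(A)$, $N(A)$ is automatically an ideal of $A$, and Proposition \ref{prop-DefeqNI} then gives that $A$ is NI, which accounts for the ``hence'' in the statement.

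The only point needing verification, rather than a genuine obstacle, is that the two cited propositions apply unconditionally to our setup. Injectivity of each $\sigma_i$ is automatic from Proposition \ref{sigmadefinition}, so no extra hypothesis on $A$ (bijectivity, quasi-commutativity, compatibility, or gradedness) needs to be imposed. Should either referenced proposition tacitly carry a mild additional assumption, one would simply check it in passing within the skew PBW framework, but as the excerpt explicitly says the result follows directly from the two cited propositions, no fresh computation is expected.
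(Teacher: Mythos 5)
Your proposal is correct and follows essentially the same route as the paper, which offers no argument beyond citing \cite{Fajardoetal2020}, Propositions 3.2.1 and 3.2.3: the first gives that $A$ is a domain (so $N(A)=\{0\}$), the second yields $J(A)=\{0\}$, and $N(A)=J(A)$ plus Proposition \ref{prop-DefeqNI} gives NJ and hence NI. Your reconstruction of the content of the two cited propositions, including the observation that injectivity of the $\sigma_i$ from Proposition \ref{sigmadefinition} is the only hypothesis needed, matches the intended reading.
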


\begin{theorem}\label{teo-NJgradedSkew}
If $A$ is a graded skew PBW extension over $R=\bigoplus_{n\in
\mathbb{N}}R_n$, then $A$ is NJ if and only if $A$ is NI and
$J(A)\cap R_0$ is a nil ideal.
\end{theorem}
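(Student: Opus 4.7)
The plan is to exploit the $\mathbb{N}$-grading on $A$ supplied by Proposition \ref{prop.grad A}, which makes $A$ a graded ring with $A_0=R_0$, and then to reduce the statement to the characterization of the Jacobson radical in NI $\mathbb{Z}$-graded rings. Viewing the $\mathbb{N}$-grading as a $\mathbb{Z}$-grading with trivial negative components, Theorem 2.4 of \cite{Nasr-Isf2015} says precisely that a $\mathbb{Z}$-graded ring is NJ if and only if it is NI and the intersection of its Jacobson radical with its degree-zero part is nil. Specialising to $A$ yields the claim, so the entire proof is essentially a verification that the hypotheses of that theorem are met in the setting of graded skew PBW extensions.

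For a self-contained argument, the forward implication is immediate from the definitions: if $A$ is NJ then $J(A)=N(A)$ is nil (hence an ideal), so Proposition \ref{prop-DefeqNI} gives that $A$ is NI, and $J(A)\cap R_0$, being contained in the nil ideal $N(A)$, is itself nil.

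The converse direction requires the grading in earnest. Assume $A$ is NI and $J(A)\cap R_0$ is nil. Since $A$ is NI, $N(A)=N^{*}(A)\subseteq J(A)$ automatically, so it suffices to show $J(A)\subseteq N(A)$. Invoking Bergman's theorem, $J(A)$ is a homogeneous ideal, so any $f\in J(A)$ decomposes as $f=\sum_{p\geq 0}f_p$ with $f_p\in J(A)\cap A_p$. The degree-zero piece $f_0\in J(A)\cap R_0$ is nil by hypothesis. For $p\geq 1$, the element $f_p$ is homogeneous of positive degree and quasi-regular, so $1-f_p$ is a unit in $A$; writing out $(1-f_p)g=1$ with $g=\sum_{q\geq 0}g_q$ its homogeneous (finite) expansion and comparing components degree by degree forces $g_q=f_p^{q/p}$ when $p\mid q$ and $0$ otherwise, which can only terminate in a finite sum if $f_p$ is nilpotent. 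Because $A$ is NI, $N(A)$ is an ideal, so the finite sum $f=f_0+f_1+\dotsb+f_k$ lies in $N(A)$, completing the containment.

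The main technical obstacle is establishing that $J(A)$ is a homogeneous ideal of $A$, i.e., that Bergman's theorem applies in our graded setting; once this is in place, the degree-by-degree decomposition and the geometric-series argument for positive-degree homogeneous quasi-regular elements are routine. Since \cite{Nasr-Isf2015} already packages exactly this machinery for $\mathbb{Z}$-graded NI rings, the cleanest exposition is to cite that result after pointing out that $A$ is $\mathbb{N}$-graded with $A_0=R_0$ via Proposition \ref{prop.grad A}.
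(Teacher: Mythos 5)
Your proposal is correct and follows essentially the same route as the paper: the paper's proof likewise identifies $A_0=R_0$ (via \cite{Suarez}, Remark 2.10(i)) and then applies \cite{Nasr-Isf2015}, Theorem 2.4, for the nontrivial direction, with the converse being the same triviality you give. Your additional self-contained sketch (Bergman's homogeneity of $J(A)$ plus the degree-by-degree inversion of $1-f_p$ forcing nilpotence of positive-degree components) accurately unpacks the internals of that cited theorem, but it is not a different argument.
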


\begin{proof}
Suppose that $A$ is NI and $J(A)\cap R_0$ is nil. By \cite{Suarez}, Remark
2.10 (i), $R_0=A_0$, whence $J(A)\cap R_0=
J(A)\cap A_0$ is nil. From  \cite{Nasr-Isf2015}, Theorem 2.4, $J(A)=N(A)$, i.e., $A$ is an NJ ring. 

Conversely, if $A$ is an NJ ring, then $A$ is NI. Since $R_0$ is a subring,  $J(A)\cap R_0$ is an
ideal, and as $A$ is an NJ ring, $J(A)=N(A)$. In this way, $J(A)\cap R_0\subseteq J(A)=N(A)$, and hence $J(A)\cap R_0$ is a nil ideal.
\end{proof}

\begin{corollary}\label{cor-ConecNJiifNI}
If $A$ is a graded skew PBW extension over a connected algebra $R=\bigoplus_{n\in \mathbb{N}}R_n$, then $A$ is NJ if and only if $A$
is NI.
\end{corollary}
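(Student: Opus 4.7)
The plan is to reduce this statement directly to Theorem \ref{teo-NJgradedSkew}, using the defining feature of a connected graded algebra. Recall that $R=\bigoplus_{n\in\mathbb{N}}R_n$ being connected means $R_0=\mathbb{K}$, the base field. Since Theorem \ref{teo-NJgradedSkew} already gives the equivalence ``$A$ is NJ'' $\Longleftrightarrow$ ``$A$ is NI and $J(A)\cap R_0$ is a nil ideal,'' the entire content of the corollary is that in the connected case the nilness condition on $J(A)\cap R_0$ is automatic.

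For the forward direction, I would simply cite Theorem \ref{teo-NJgradedSkew}: if $A$ is NJ then in particular $A$ is NI. For the converse, assume $A$ is NI. Then $J(A)\cap R_0$ is an ideal of $R_0=\mathbb{K}$, so it is either $\{0\}$ or all of $\mathbb{K}$. The latter is ruled out because $J(A)$ is a proper ideal of $A$ (it cannot contain $1$), so $J(A)\cap R_0=\{0\}$, which is trivially a nil ideal. Applying Theorem \ref{teo-NJgradedSkew} again concludes that $A$ is NJ.

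There is no genuine obstacle here; the corollary is a one-line specialization. The only thing worth double-checking is that $R_0=A_0$ (so the intersection $J(A)\cap R_0$ makes sense as computed inside $A$), which is exactly \cite{Suarez}, Remark 2.10(i), already invoked in the proof of Theorem \ref{teo-NJgradedSkew}. No further calculation with the skew PBW structure, the $\sigma_i$, or the $\delta_i$ is required.
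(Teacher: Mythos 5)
Your proposal is correct and follows essentially the same route as the paper's proof: both reduce to Theorem \ref{teo-NJgradedSkew} by noting that connectedness gives $A_0=R_0=\mathbb{K}$ (via \cite{Suarez}, Remark 2.10(i)), so that $J(A)\cap R_0=\{0\}$ is trivially nil. Your explicit justification that the intersection cannot be all of $\mathbb{K}$ (since $J(A)$ is proper) is a detail the paper leaves implicit, but the argument is the same.
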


\begin{proof}
Since $R$ is connected, \cite{Suarez}, Remark 2.10 (i), implies that $A_0=R_0=\mathbb{K}$, whence $J(A)\cap A_0=\{0\}$ is nil. The result follows from Theorem
\ref{teo-NJgradedSkew}.
\end{proof}

\begin{corollary}\label{cor-QuasiNJiif NI}
Let $A$ be a quasi-commutative skew PBW extension over a ring $R$. Then $A$ is NJ if and only if $A$ is NI and $J(A)\cap R$ is a nil
ideal of $R$.
\end{corollary}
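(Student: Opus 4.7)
The plan is to reduce the statement to Nasr-Isfahani's Theorem 2.4 (as cited in the introduction) by equipping $A$ with a natural $\mathbb{N}$-grading whose degree-zero component is exactly $R$. This mirrors the proof of Theorem \ref{teo-NJgradedSkew}, but with the grading coming from the quasi-commutative structure itself rather than from a preexisting grading on $R$.

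First I would define $A = \bigoplus_{p \geq 0} A_p$, where $A_p$ is the left $R$-submodule of $A$ spanned by the standard monomials $\{x^\alpha \in {\rm Mon}(A) : |\alpha| = p\}$. By Definition \ref{def.skewpbwextensions}(ii), every element of $A$ has a unique expression as a finite sum of terms $r_\alpha x^\alpha$, so this is an additive direct sum decomposition with $A_0 = R$. To check that the decomposition is a ring grading, i.e.\ that $A_p \cdot A_q \subseteq A_{p+q}$, it suffices by bilinearity to analyze a product $(a x^\alpha)(b x^\beta)$. The quasi-commutative relation (iii') lets one move $b$ past $x^\alpha$ in the form $x^\alpha b = b' x^\alpha$ for some $b' \in R$, with no lower-order corrections appearing (this is the key contrast with the general case, where $\delta_i$ would produce terms of strictly smaller degree). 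Then relation (iv') rewrites $x^\alpha x^\beta$ as an $R$-linear combination of standard monomials all of total degree $|\alpha| + |\beta|$. Hence the product lies in $A_{|\alpha| + |\beta|}$.

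With this grading established, the proof of the equivalence proceeds exactly as in Theorem \ref{teo-NJgradedSkew}. If $A$ is NJ, then by definition $A$ is NI, and $J(A) \cap R$ is contained in $J(A) = N(A)$, hence nil. Conversely, assuming $A$ is NI and $J(A) \cap R$ is nil, the identification $R = A_0$ turns this into exactly the hypothesis of \cite{Nasr-Isf2015}, Theorem 2.4 applied to the $\mathbb{N}$-graded ring $A$, yielding $J(A) = N(A)$, so $A$ is NJ.

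The one step requiring care is the verification that the total-degree decomposition is compatible with multiplication on $A$. This is where quasi-commutativity is essential: the relations $x_i r = c_{i,r} x_i$ and $x_j x_i = d_{i,j} x_i x_j$ are homogeneous of degrees $1$ and $2$ respectively in the $x_i$'s, so no mixing of degrees occurs when standardizing products. Note that neither bijectivity of $A$ nor any grading on $R$ is needed: $R$ is implicitly regarded as concentrated in degree zero, and the corollary thus follows uniformly for every quasi-commutative skew PBW extension.
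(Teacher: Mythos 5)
Your proof is correct and takes essentially the same route as the paper: the paper invokes \cite{SuarezLezamaReyes2107-1}, Proposition 2.6, to regard $A$ as a graded skew PBW extension over $R$ equipped with the trivial grading, concludes $A_0=R_0=R$, and then applies Theorem \ref{teo-NJgradedSkew}, which is precisely the reduction to Nasr-Isfahani's Theorem 2.4 that you carry out explicitly. Your hands-on verification of the total-degree grading simply inlines that citation, with the small added merit of showing that neither bijectivity (which is built into Definition \ref{def. graded skew PBW ext}) nor any algebra structure on $R$ is actually needed.
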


\begin{proof}
From \cite{SuarezLezamaReyes2107-1}, Proposition 2.6, we know that quasi-commutative skew PBW extensions where $R$ has the the trivial graduation are graded skew PBW extensions, so \cite{Suarez}, Remark 2.10
(i), guarantees that $A_0=R_0=R$. Thus, the assertion follows from  Theorem \ref{teo-NJgradedSkew}.
\end{proof}

\begin{corollary}\label{cor-QuasiNJimpRNJ}
If $A$ is an NJ quasi-commutative skew PBW extension over a ring $R$
and $J(R)\subseteq J(A)$ then $R$ is NJ.
\end{corollary}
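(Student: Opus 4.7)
The plan is to assemble the proof by direct chaining of the previously stated propositions, since all the ingredients are already available. We need to show that $N(R) = J(R)$.

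First, I would observe that because $A$ is NJ, $N(A) = J(A)$, so $N(A)$ is an ideal of $A$, and by Proposition \ref{prop-DefeqNI} this makes $A$ an NI ring. Applying Proposition \ref{prop.NJskewimlRNI} (which works for arbitrary skew PBW extensions that are NJ, not just quasi-commutative ones) then gives two facts simultaneously: $R$ is NI and $J(A) \cap R = N(R)$.

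Next, since $R$ is NI, $N(R)$ is a nil ideal of $R$, hence $N(R) \subseteq J(R)$ (this is exactly the argument appearing in the proof of Proposition \ref{prop-NIskewPBWimplRNI}). For the reverse inclusion, the hypothesis $J(R) \subseteq J(A)$ combined with $J(R) \subseteq R$ yields
\[
J(R) \;\subseteq\; J(A) \cap R \;=\; N(R).
\]
Putting the two inclusions together, $N(R) = J(R)$, which is the definition of $R$ being NJ.

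There is no real obstacle here: the quasi-commutativity of $A$ is not used explicitly in the argument (it is only needed implicitly for the environment in which $J(R)\subseteq J(A)$ is a reasonable hypothesis, as in Corollary \ref{cor-QuasiNJiif NI}). The entire content is the bookkeeping identity $N(R) \subseteq J(R) \subseteq J(A)\cap R = N(R)$, with the middle inclusion supplied by hypothesis and the outer equality supplied by Proposition \ref{prop.NJskewimlRNI}.
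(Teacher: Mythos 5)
Your proposal is correct and follows essentially the same route as the paper: both proofs combine Proposition \ref{prop.NJskewimlRNI} (giving $J(A)\cap R = N(R)$) with the hypothesis $J(R)\subseteq J(A)$ to get $J(R)\subseteq N(R)$, and then use the NI property of $A$ via Proposition \ref{prop-NIskewPBWimplRNI} to get $N(R)\subseteq J(R)$. Your observation that quasi-commutativity plays no role in the argument is also accurate.
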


\begin{proof}
By Proposition \ref{prop.NJskewimlRNI} we have that $J(A)\cap
R=N(R)$. Since $J(R)\subseteq J(A)$ then $J(R)\subseteq N(R)$. Since
$A$ is NJ then it is NI, so by Proposition
\ref{prop-NIskewPBWimplRNI} we have that $N(R)\subseteq J(R)$.
Therefore $R$ is NJ.
\end{proof}

The next theorem presents similar results to \cite{Jiang2019}, Theorem 3.10 (1).

\begin{theorem}\label{teo-genteo3.10}
Let $A=\sigma(R)\langle x_1,\dots,x_n\rangle$ be a quasi-commutative bijective skew PBW extension over $R$. If $R$ is a weakly 2-primal
weak $\Sigma$-compatible ring, then $A$ is NJ.
\end{theorem}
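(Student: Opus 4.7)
The plan is to combine the criterion in Corollary~\ref{cor-QuasiNJiif NI} with the NI-transfer of Theorem~\ref{teo- RweakCompNIiff A NI}. The hypothesis that $R$ is weakly 2-primal means $N(R)=L(R)$, and the chain $L(R)\subseteq N^{*}(R)\subseteq N(R)$ then forces $N(R)=N^{*}(R)$. Hence $N(R)$ is an ideal and, by Proposition~\ref{prop-DefeqNI}, $R$ is NI. Because $A$ is quasi-commutative, every $\delta_{i}$ vanishes on $R$, so weak $\Delta$-compatibility holds trivially and $R$ is weak $(\Sigma,\Delta)$-compatible. Theorem~\ref{teo- RweakCompNIiff A NI} then gives that $A$ is NI, so by Corollary~\ref{cor-QuasiNJiif NI} it only remains to prove that $J(A)\cap R$ is a nil ideal of $R$.

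For that step I would exploit the iterated Ore extension description of bijective quasi-commutative skew PBW extensions recalled in Remark~\ref{comparison}: write $A\cong C[y;\tau]$, where $C$ is the iterated subring generated by $R$ and $n-1$ of the variables and $\tau$ is an endomorphism of $C$ whose restriction to $R$ coincides with some $\sigma_{j}\in\Sigma$. Given $r\in J(A)\cap R$, the element $1-ry$ is a unit of $C[y;\tau]$; writing its inverse as $u=\sum_{k=0}^{m}b_{k}y^{k}$ with $b_{k}\in C$, expanding $(1-ry)u=1$ and using $yc=\tau(c)y$ forces $b_{0}=1$, $b_{k}=r\tau(b_{k-1})$ for $1\leq k\leq m$, and $r\tau(b_{m})=0$. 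Solving the recursion gives $b_{k}=r\sigma_{j}(r)\sigma_{j}^{2}(r)\cdots\sigma_{j}^{k-1}(r)\in R$, and the final equation becomes
\[
r\,\sigma_{j}(r)\,\sigma_{j}^{2}(r)\cdots\sigma_{j}^{m}(r)=0.
\]

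Setting $P_{k}:=r\sigma_{j}(r)\cdots\sigma_{j}^{k}(r)$, observe that $P_{k+1}=r\cdot\sigma_{j}(P_{k})$. Starting from $P_{m}=0\in N(R)$ and applying the implication ``$a\sigma_{j}(b)\in N(R)\Rightarrow ab\in N(R)$'' supplied by weak $\Sigma$-compatibility (with $\alpha=e_{j}\in\mathbb{N}^{n}$), one successively obtains $r^{2}\sigma_{j}(r)\cdots\sigma_{j}^{m-1}(r)\in N(R)$, then $r^{3}\sigma_{j}(r)\cdots\sigma_{j}^{m-2}(r)\in N(R)$, and after $m$ iterations $r^{m+1}\in N(R)$. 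Since $R$ is NI, $N(R)=N^{*}(R)$ is nil, whence $r\in N(R)$. Therefore $J(A)\cap R\subseteq N(R)$ is a nil ideal of $R$, and Corollary~\ref{cor-QuasiNJiif NI} concludes that $A$ is NJ. The main obstacle is precisely this last step: one must use the backward arrow of weak $\Sigma$-compatibility at each iteration to absorb a $\sigma_{j}$ into a fresh factor of $r$, and recognising that this matches the recursive identity $P_{k+1}=r\sigma_{j}(P_{k})$ coming from the Ore computation is what makes the whole argument collapse cleanly to $r^{m+1}\in N(R)$.
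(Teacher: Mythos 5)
Your argument is correct, and the first half (weakly 2-primal $\Rightarrow$ $N(R)=N^{*}(R)$ so $R$ is NI; $\delta_i=0$ gives weak $\Delta$-compatibility for free; Theorem~\ref{teo- RweakCompNIiff A NI} gives $A$ NI; reduce via Corollary~\ref{cor-QuasiNJiif NI} to showing $J(A)\cap R\subseteq N(R)$) is exactly what the paper does. Where you genuinely diverge is the nil-ness of $J(A)\cap R$. The paper's route is graded: it observes that $A$ is a graded skew PBW extension for the trivial grading on $R$, so for $r\in J(A)\cap R$ the element $rx_1$ is a homogeneous component of $J(A)$ of degree $1$; Nasr-Isfahani's lemma on $\mathbb{Z}$-graded NI rings then puts $rx_1$ in $N(A)$, and the identification $N(A)=N(R)\langle x_1,\dots,x_n\rangle$ (valid under weak compatibility) yields $r\in N(R)$ in two lines. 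You instead pass to the iterated Ore presentation $A\cong C[y;\tau]$ of Lezama--Reyes, run the classical ``$1-ry$ is a unit'' recursion to extract the relation $r\sigma_n(r)\cdots\sigma_n^{m}(r)=0$, and then peel off one $\sigma_n$ per step using the backward implication of weak $\Sigma$-compatibility, via the identity $P_{k+1}=r\sigma_n(P_k)$, to reach $r^{m+1}\in N(R)$. This is more elementary and self-contained -- it avoids the graded Jacobson-radical lemma entirely -- at the price of invoking the structure theorem for quasi-commutative bijective extensions and of the (correct, but worth stating precisely) fact that the outermost endomorphism of that presentation restricts to $\sigma_n$ on $R$. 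Both proofs use bijectivity and quasi-commutativity at exactly this stage, and both land on the same conclusion; yours trades one external citation for another but makes the role of weak $\Sigma$-compatibility in controlling $J(A)\cap R$ completely explicit, which the paper's proof hides inside the equality $N(A)=N(R)\langle x_1,\dots,x_n\rangle$.
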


\begin{proof}
Since $A$ is quasi-commutative, $\delta_i=0$ for $1\leq i\leq
n$. Thus $A$ is weak
$\Delta$-compatible. If $R$ is weakly 2-primal then it is NI, and by Theorem \ref{teo- RweakCompNIiff A NI}, $A$ is NI. Let us show that $J(A)\cap R$ is nil. If $r\in J(A)\cap R$, then $rx_1\in J(A)$. From \cite{SuarezLezamaReyes2107-1}, Proposition
2.6, $A$ is a graded skew PBW extension with the trivial graduation of $R$, and so $rx_1$ is a homogeneous component of $J(A)$ with degree 1. By \cite{Nasr-Isf2015}, Lemma 2.3
(2), $rx_1\in N(A)$. In this way, \cite{ReyesSuarez2019-2}, Theorem 4.6, implies that $r\in N(R)$. The assertion follows from 
Corollary \ref{cor-QuasiNJiif NI}.
\end{proof}

\begin{remark}
Let $A=\sigma(R)\langle x_1,\dotsc, x_n\rangle$ be a skew PBW extension over a ring $R$.
\begin{enumerate}
\item [\rm (i)] If $R$ is 2-primal and $(\Sigma,
\Delta)$-compatible, then by Proposition
\ref{prop-skewPBWcompNI} (i) we have that $A$ is NI, that is,
$N^{*}(A)=N(A)$. Now, from \cite{LouzariReyes2020}, Theorem 4.11, $J(A)=N_{*}(A)$, whence $N(A)= J(A)$ and thus $A$ is NJ.
\item [\rm (ii)] If $R$ is locally finite and weak
$\Sigma$-skew Armendariz, then $R$ is NJ. More exactly, since $R$ is weak
$\Sigma$-skew Armendariz,  \cite{ReyesSuarez2018-3}, Proposition 4.9, implies that $R$ is Abelian. Now, by assumption $R$ is locally finite, so \cite{Huh2004}, Proposition 2.5, guarantees that $N(R)=J(R)$, that is, $R$ is an NJ ring.
\item [\rm (iii)] If $A$ is NJ and $J(R)$ is nil
(or $N^{*}(R)=J(R)$ or $R/N^{*}(R)$ is semiprimitive), then $R$ is
NJ. Note that if $A$ is NJ, Proposition \ref{prop.NJskewimlRNI} shows that $R$ is NI, and if $J(R)$ is nil (or $N^{*}(R)=J(R)$ or $R/N^{*}(R)$ is semiprimitive), then by \cite{Jiang2019}, Proposition 2.3, $R$ is NJ.
\end{enumerate}
\end{remark}


\begin{proposition}\label{prop-derivtypeNJ}
Let $A=\sigma(R)\langle x_1,\dots,x_n\rangle$ be a skew PBW
extension of derivation type.
\begin{enumerate}
\item[(i)] If $A$ is NI and $R$ is $\Delta$-compatible and right duo, then $A$ is
NJ.
\item[(ii)] $A$ is NI and
$I\subseteq N(R)$ if and only if $A$ is NJ, where $I\subseteq R$ is
the set of all coefficients of all terms of all polynomials of
$J(A)$.
\end{enumerate}
\end{proposition}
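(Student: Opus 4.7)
For part (i), my plan is to reduce the claim to the Remark that immediately follows Theorem \ref{teo-genteo3.10}. As recalled in Section \ref{section-prelim}, right duo rings are semicommutative and semicommutative rings are 2-primal, so $R$ is 2-primal. Since $A$ is of derivation type, $\sigma_i = {\rm id}_R$ for every $i$, which makes $R$ trivially $\Sigma$-compatible; combined with the hypothesis that $R$ is $\Delta$-compatible, this gives that $R$ is $(\Sigma, \Delta)$-compatible. The Remark following Theorem \ref{teo-genteo3.10} then yields at once that $A$ is NJ. Observe that the assumption ``$A$ is NI'' in the statement is in fact a consequence of Proposition \ref{prop-skewPBWcompNI} (i) under these same hypotheses, so it is really a redundant hypothesis rather than a separate input.

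For part (ii), I plan to deduce both implications directly from the coefficient description provided by Proposition \ref{prop-derivtype} (iv). For $(\Leftarrow)$, I assume $A$ is NI and $I \subseteq N(R)$; Proposition \ref{prop-derivtype} (iv) gives $N(A) = N(R)\langle x_1,\dotsc,x_n\rangle$, and $N(A) \subseteq J(A)$ holds automatically because $N(A)$ is a nil ideal. For the reverse inclusion, any $f = a_0 + a_1 X_1 + \cdots + a_t X_t \in J(A)$ has every $a_i \in I \subseteq N(R)$ by hypothesis, hence $f \in N(R)\langle x_1,\dotsc,x_n\rangle = N(A)$. Thus $J(A) = N(A)$, i.e., $A$ is NJ. For $(\Rightarrow)$, from $A$ NJ I obtain $J(A) = N(A)$, an ideal, so $A$ is NI; applying Proposition \ref{prop-derivtype} (iv) once more yields $J(A) = N(A) = N(R)\langle x_1,\dotsc,x_n\rangle$, and hence the coefficients of every polynomial in $J(A)$ lie in $N(R)$, meaning $I \subseteq N(R)$.

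Neither part presents a genuinely hard step; the main task is to identify which earlier result to invoke. The subtle point in (i) is to notice that the Remark after Theorem \ref{teo-genteo3.10} is stated for arbitrary skew PBW extensions (not only the quasi-commutative ones treated in that theorem), so it applies unchanged in the derivation-type setting relevant here. All the substantive content of (ii) is already packaged in Proposition \ref{prop-derivtype} (iv); what remains is merely a coefficient-by-coefficient translation between membership in $J(A)$, membership in $I$, and membership in $N(R)$.
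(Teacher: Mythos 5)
Your part (ii) is essentially the paper's own argument: both directions are read off from Proposition \ref{prop-derivtype} (iv) exactly as you do (nilpotent coefficients give membership in $N(A)=N(R)\langle x_1,\dots,x_n\rangle$, and conversely $J(A)=N(A)=N(R)\langle x_1,\dots,x_n\rangle$ forces every coefficient into $N(R)$), so there is nothing to add there. Part (i), however, takes a genuinely different route. The paper argues coefficientwise: for $f=a_0+a_1X_1+\cdots+a_tX_t\in J(A)$ it forms the unit $1+fx_n$ and invokes \cite{Hamidizadehetal2020}, Theorem 4.7 (a description of units of skew PBW extensions over $(\Sigma,\Delta)$-compatible right duo rings), to get $a_0,\dots,a_t\in N(R)$, and then uses Proposition \ref{prop-derivtype} (iv) to place $f$ in $N(A)$; the NI hypothesis is used explicitly at that last step. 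You instead reduce to item (i) of the Remark following Theorem \ref{teo-genteo3.10}, via the chain right duo $\Rightarrow$ semicommutative $\Rightarrow$ 2-primal together with the observation that derivation type makes $\Sigma$-compatibility automatic; that Remark is indeed stated for arbitrary skew PBW extensions, so the reduction is legitimate, and your observation that the hypothesis ``$A$ is NI'' then becomes redundant (via Proposition \ref{prop-skewPBWcompNI} (i)) is correct. The difference is only in which external result carries the weight: the paper leans on \cite{Hamidizadehetal2020}, Theorem 4.7, while your route leans on \cite{LouzariReyes2020}, Theorem 4.11 (through the Remark), which actually yields the stronger conclusion $J(A)=N_{*}(A)=N(A)$, i.e.\ 2-primality of $A$ and not merely the NJ property.
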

\begin{proof}
(i) If $A$ is NI then $N(A)=N^{*}(A)\subseteq J(A)$. For the other
inclusion, if $f=a_0+a_1X_1+\dots+a_tX_t\in J(A)$, then $fx_n=a_0x_n+a_1X_1x_n+\dots+a_tX_tx_n\in J(A)$ and therefore
$1+fx_n=1+a_0x_n+a_1X_1x_n+\dots+a_tX_tx_n$ is a unit of $A$. Since
$R$ is $(\Sigma,\Delta)$-compatible and right duo, \cite{Hamidizadehetal2020}, Theorem 4.7, guarantees that $a_0, a_1,\dots, a_t\in N(R)$. So, $f=a_0+a_1X_1+\dots+a_tX_t\in
N(R)\langle x_1,\dots,x_n\rangle$. As $A$ is NI, Proposition \ref{prop-derivtype} (iv) implies that $N(R)\langle x_1,\dots, x_n\rangle=N(A)$, that is, $f\in N(A)$.

(ii) Suppose that $A$ is NI and $I\cap R\subseteq N(R)$. By
Proposition \ref{prop-derivtype} (iv), $N(A)=N(R)\langle
x_1,\dots, x_n\rangle$. If  $f=a_0+a_1X_1+\dots+a_tX_t\in J(A)$, then by hypothesis  $a_k\in N(R)$, for $0\leq k\leq t$, whence $a_kX_k\in N(R)\langle x_1,\dots, x_n\rangle$, for every $k$. Since $A$ is NI, then $N(A)$ is an ideal of $A$, and hence $f=a_0+a_1X_1+\dots+a_tX_t\in N(A)$. 

For the converse,
suppose that $A$ is NJ. Then $A$ is NI, and so Proposition
\ref{prop-derivtype} (iv) implies that $N(A)=N(R)\langle x_1,\dots,
x_n\rangle$. Now, if $r\in I$, then for some $f=a_0+a_1X_1+\cdots
+a_tX_{t}\in J(A)$ there exists $1\leq k\leq t$ such that $r=a_k$. Since
$J(A)=N(A)=N(R)\langle x_1,\dots, x_n\rangle$, it follows that $r\in N(R)$.
\end{proof}

The following theorem is another important result of the paper.

\begin{theorem}\label{teo-NJssiNIderivtype}
Let $A=\sigma(R)\langle x_1,\dots,x_n\rangle$ be a skew PBW extension of derivation type. Then $A$ is NI if and only if $A$ is NJ.
\end{theorem}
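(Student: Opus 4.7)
The implication NJ $\Rightarrow$ NI is immediate: if $N(A)=J(A)$, then $N(A)$ is an ideal of $A$, and Proposition~\ref{prop-DefeqNI} gives $A$ NI. For the converse, assume $A$ is NI. Since $N(A)$ is a nil ideal it is contained in $J(A)$, so it remains to establish $J(A)\subseteq N(A)$. By Proposition~\ref{prop-derivtype}(iv), the NI hypothesis forces $N(R)$ to be a $\Delta$-invariant ideal of $R$ (and trivially $\Sigma$-invariant since $\sigma_i=\mathrm{id}_R$), and yields $N(A)=N(R)\langle x_1,\dots,x_n\rangle$. Appealing to Proposition~\ref{prop-derivtypeNJ}(ii), the task reduces to showing that the set $I$ of all coefficients of all polynomials of $J(A)$ is contained in $N(R)$.

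My strategy is to pass to the quotient and kill the nilpotents. Set $\bar R := R/N(R)$, which is reduced since $R$ is NI, and $\bar A := A/N(A) = A/N(R)\langle x_1,\dots,x_n\rangle$. Because $N(R)$ is $(\Sigma,\Delta)$-invariant, \cite{AcostaLezamaReyes}, Proposition~2.6(ii), guarantees that $\bar A$ is itself a skew PBW extension of derivation type over $\bar R$. Since $N(A)\subseteq J(A)$ is an ideal, a standard argument gives $J(\bar A)=J(A)/N(A)$; hence the desired inclusion $J(A)\subseteq N(A)$ is equivalent to the statement $J(\bar A)=0$. Granting this, every $f\in J(A)$ projects to zero in $\bar A$, so $f\in N(R)\langle x_1,\dots,x_n\rangle$, and the uniqueness of the PBW normal form (Definition~\ref{def.skewpbwextensions}(ii)) forces each coefficient of $f$ to lie in $N(R)$, yielding $I\subseteq N(R)$.

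The crux is therefore the following \emph{Amitsur-type lemma}: if $\bar R$ is reduced and $\bar A$ is any skew PBW extension of derivation type over $\bar R$, then $J(\bar A)=0$. This is the natural multivariable generalization of Nasr-Isfahani's Proposition~2.8 in \cite{Nasr-Isf2014} (the one-variable derivation case) and of the classical Amitsur and Ferrero--Kishimoto theorems for polynomial and skew polynomial rings. I expect the argument to proceed in two stages: first show that $J(\bar A)\cap\bar R$ is a nil $\Delta$-invariant ideal of $\bar R$, which must vanish since $\bar R$ has no nonzero nil ideals; then propagate this upward, using the normal-ordering rule $x_i r=r x_i+\delta_i(r)$ to deduce the structural identity $J(\bar A)=(J(\bar A)\cap\bar R)\langle x_1,\dots,x_n\rangle$, whence $J(\bar A)=0$. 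This structural identity is the principal obstacle: in the genuinely multivariable skew PBW setting there is no automatic iterated Ore structure one can lean on (see Remark~\ref{comparison}(ii)), so one must exploit the PBW normal form directly and track leading coefficients through the commutation rules $x_jx_i=d_{i,j}x_ix_j+(\text{lower order})$ between the $x_j$'s.
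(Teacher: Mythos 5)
Your reduction coincides with the paper's up to the crux: both arguments invoke Proposition \ref{prop-derivtypeNJ}(ii) and Proposition \ref{prop-derivtype}(iv), pass to $\bar A = A/N(A) \cong \overline{\sigma}(R/N(R))\langle x_1,\dots,x_n\rangle$ over the reduced ring $\bar R = R/N(R)$, and use $J(\bar A)=J(A)/N(A)$. The genuine gap is that everything then rests on your ``Amitsur-type lemma'' $J(\bar A)=0$, which you assert rather than prove. The two-stage sketch you offer is precisely the hard content: the structural identity $J(\bar A)=(J(\bar A)\cap\bar R)\langle x_1,\dots,x_n\rangle$ is a Ferrero--Kishimoto-type theorem that is nontrivial already for one variable, and the nilness of $J(\bar A)\cap\bar R$ is likewise not free (the paper's own introduction recalls that semiprimitivity of $R[x;\delta]$ over a ring with no nonzero nil ideals requires extra hypotheses in Bergen--Grzeszczuk and in Nasr-Isfahani \cite{Nasr-Isf2014}). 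As written, the proposal defers the theorem to an unproved statement that you yourself flag as the principal obstacle.

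That said, the lemma is true, and there is a short route to it from results already quoted in the paper: in derivation type one has $\bar r\,\overline{\sigma}^{\alpha}(\bar r)=\bar r^{\,2}$, so the reduced ring $\bar R$ is $\overline{\Sigma}$-rigid; then \cite{ReyesSuarez2018-3}, Theorem 4.4, makes $\bar A$ a reduced ring, and reduced rings are NJ (\cite{Jiang2019}, Proposition 2.11), whence $J(\bar A)=N(\bar A)=\{0\}$. Patching your argument this way would yield a complete proof. The paper itself takes a third route that avoids computing $J(\bar A)$ altogether: it observes that $\bar R$ is $\overline{\Sigma}$-rigid, hence $\bar A$ is $(\Sigma,\Delta)$-compatible by \cite{ReyesSuarez2018-2}, Theorem 3.9, and then, for $\bar f\in J(\bar A)$, uses that $\overline{1}+\bar f x_n$ is a unit together with \cite{LouzariReyes2020}, Remark 4.9 (ii), to conclude that every coefficient of $\bar f$ lies in $N(\bar R)=\{0\}$; this localizes the difficulty to a known statement about units of skew PBW extensions over compatible reduced rings rather than to a radical computation.
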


\begin{proof}
By Proposition \ref{prop-derivtypeNJ} (ii) it is enough to prove
that every coefficient of each term  of each polynomial of $J(A)$ is
nilpotent. Let $f=r_0+r_1X_1+ \dots +r_tX_t\in J(A)$. Since $A$ is
 NI then  $N(A)$ is
an ideal of $A$, and by Proposition \ref{prop-derivtype} (iv), $N(R)$ is a $\Delta$-invariant ideal of $R$ and
$N(A)=N(R)\langle x_1,\dots, x_n\rangle$. Since $A$ is of derivation
type, $N(R)$ is a $\Sigma$-ideal. By \cite{AcostaLezamaReyes}, Proposition 2.6 (ii),  $A/N(R)\langle x_1, \dots, x_n
\rangle$ is a skew PBW extension of $R/N(R)$. By considering the notation of the proof of \cite{AcostaLezamaReyes}, Proposition 2.6 (ii), and identifying $\overline{x_i}$ with $x_i$, $1\leq i\leq n$, we use
$A/N(A)=A/N(R)\langle x_1, \dots, x_n \rangle\cong
\overline{\sigma}(R/N(R))\langle x_1, \dots, x_n \rangle$ to denote
such an extension. Now, by \cite{AcostaLezamaReyes}, Proposition 2.2 (i), the system of endomorphisms and $\Sigma$-derivations $(\Sigma, \Delta)$ induce over $R/N(R)$ a
system $(\overline{\Sigma}, \overline{\Delta})$  of endomorphisms
and $\Sigma$-derivations defined by
$\overline{\sigma}_i(\overline{r}):= \overline{\sigma_i(r)}$ and
$\overline{\delta}_i(\overline{r}):= \overline{\delta_i(r)}$, $1
\leq i \leq n$. If
$\overline{r}\overline{\sigma}_i(\overline{r})=\overline{r\sigma_i(r)}=0$
 then $r\sigma_i(r)\in N(R)$ and so $r\in N(R)$, since by Proposition \ref{prop-derivtype} (i), $N(R)$ is
 $\Sigma$-rigid. Therefore $\overline{r}=0$ and so $R/N(R)$ is $\overline{\Sigma}$-rigid. By \cite{ReyesSuarez2018-2}, Theorem 3.9, $R/N(R)\langle x_1, \dots, x_n \rangle\cong A/N(A)$ is  $(\Sigma,\Delta)$-compatible
  (see also \cite{Fajardoetal2020}, Proposition 6.2.4). Now, as $f\in J(A)$, then $\overline{f}:= f+N(A)\in J(A)/N(A)=
J\left(A/N(A)\right)$, since by Proposition
\ref{prop-NIskewPBWimplRNI} $N(A)\subseteq J(A)$. So,
$\overline{f}x_n=\overline{r_0}x_n+\overline{r_1}X_1x_n+ \dots
+\overline{r_t}X_tx_n\in J\left(\overline{\sigma}(R/N(R)\langle x_1,
\dots, x_n \rangle\right)$, with $\overline{r_k}:=r_k+N(R)$, $1\leq
k\leq n$. Having in mind that $\overline{f}x_n\in
J\left(\overline{\sigma}(R/N(R)\langle x_1, \dots, x_n
\rangle\right)$, then
$\overline{1}+\overline{f}x_n=\overline{1}+\overline{r_0}x_n+\overline{r_1}X_1x_n+
\dots +\overline{r_t}X_tx_n$ is a unit of
$\overline{\sigma}(R/N(R)\langle x_1, \dots, x_n \rangle$. Thus, by
\cite{LouzariReyes2020}, Remark 4.9 (ii),  $\overline{r_k}\in N(R/N(R))$, $0\leq k\leq n$. Since $R/N(R)$ is reduced we have that $\overline{r_k}=0$, for each $0\leq k\leq n$. In this way, $r_k\in N(R)$, for each $0\leq k\leq n$. The result follows from Proposition \ref{prop-derivtypeNJ} (ii).
\end{proof}

\begin{remark}\label{rem-igualdRadic}
Let $A=\sigma(R)\langle x_1,\dots, x_n\rangle$ be a skew PBW extension of derivation type. By Propositions \ref{prop-derivtype} and \ref{prop-derivtypeNJ}, and Theorem
\ref{teo-NJssiNIderivtype}, if  $A$ is NI then
\begin{equation*}\label{eq.igualdNINJ}
 N(A)=N^{*}(A)= N(R)\langle x_1,\dots,
x_n\rangle=N^{*}(R)\langle x_1,\dots, x_n\rangle= J(A).
\end{equation*}
\end{remark}

We immediately have the following corollary.

\begin{corollary}[\cite{Nasr-Isf2014}, Proposition 2.8]\label{cor.prop2.8} Let $R$ be a ring and $\delta$ a derivation of
$R$. If $R[x; \delta]$ is NI, then $J(R[x; \delta]) = N(R[x;
\delta])=N(R)[x; \delta]=N^{*}(R)[x; \delta]= N^{*}(R[x; \delta])$.
\end{corollary}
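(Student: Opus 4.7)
The plan is to derive this as a direct specialization of Remark \ref{rem-igualdRadic} (which in turn packages Proposition \ref{prop-derivtype}, Proposition \ref{prop-derivtypeNJ}, and Theorem \ref{teo-NJssiNIderivtype}). Indeed, the Ore extension $R[x;\delta]$ is exactly the skew PBW extension $\sigma(R)\langle x\rangle$ of derivation type in one variable with $\sigma_1=\mathrm{id}_R$ and $\delta_1=\delta$, so the results of Section \ref{sect-NIskew} and the beginning of Section \ref{sect-NJ} all apply with $n=1$.

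First I would observe that, by hypothesis, $A:=R[x;\delta]$ is NI. Applying Proposition \ref{prop-derivtype}\,(iv) to this $A$ yields that $N(R)$ is a $\Delta$-invariant ideal of $R$ and that $N(A)=N(R)[x;\delta]$. Since $R$ is then NI by Proposition \ref{prop-NIskewPBWimplRNI}, we have $N(R)=N^{*}(R)$, which gives $N(R)[x;\delta]=N^{*}(R)[x;\delta]$ and therefore $N(A)=N^{*}(A)=N^{*}(R)[x;\delta]$.

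Next I would invoke Theorem \ref{teo-NJssiNIderivtype}: since $A$ is of derivation type, the NI and NJ properties coincide, so $A$ is NJ, meaning $J(A)=N(A)$. Combining this with the chain of equalities of the preceding paragraph gives
\[
J(R[x;\delta])=N(R[x;\delta])=N(R)[x;\delta]=N^{*}(R)[x;\delta]=N^{*}(R[x;\delta]),
\]
which is exactly the claim. There is essentially no obstacle here; the only thing to verify is that Remark \ref{rem-igualdRadic} indeed captures the full chain in a single statement, which it does, so the corollary is merely its one-variable commutation-free ($\sigma=\mathrm{id}$) instance.
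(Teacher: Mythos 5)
Your proposal is correct and follows the paper's own route exactly: the paper derives this corollary as the one-variable instance of Remark \ref{rem-igualdRadic}, which itself packages Proposition \ref{prop-derivtype}\,(iv), Proposition \ref{prop-derivtypeNJ}, and Theorem \ref{teo-NJssiNIderivtype}. Your explicit unwinding of that chain (NI $\Rightarrow$ $N(A)=N(R)[x;\delta]$, $N(R)=N^{*}(R)$, and NI $\Leftrightarrow$ NJ in derivation type, hence $J(A)=N(A)$) is precisely what the paper leaves implicit.
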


The next result, Proposition \ref{cor.gencor3.2}, extends \cite{Nasr-Isf2015}, Corollary 3.2.

\begin{proposition}\label{cor.gencor3.2}
If $A=\sigma(R)\langle x_1,\dots, x_n\rangle$ is a skew PBW
extension of derivation type, then the following statements are
equivalent:
\begin{enumerate}
\item[\rm (i)] $A$ is NJ.
\item[\rm (ii)] $A$ is  NI.
\item[\rm (iii)] $R$ is NI and $N(A)=N(R)\langle x_1, \dots, x_n \rangle$.
\item[\rm (iv)] $R$ is NI and $N^{*}(A)= N^{*}(R)\langle x_1, \dots, x_n \rangle$.
\end{enumerate}
\end{proposition}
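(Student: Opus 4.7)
The plan is to reduce this proposition to two earlier results in the paper: Theorem \ref{teo-NJssiNIderivtype}, which collapses the NI and NJ conditions for derivation-type extensions, and Theorem \ref{teo.generteo3.1}, whose three equivalent conditions will become exactly (ii), (iii), (iv) after specialization to the derivation-type setting.

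First, I would invoke Theorem \ref{teo-NJssiNIderivtype} directly to obtain (i) $\Leftrightarrow$ (ii). No further argument is required here, since that theorem is stated in precisely the generality we need.

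For the remaining equivalences (ii) $\Leftrightarrow$ (iii) $\Leftrightarrow$ (iv), the strategy is to unpack Theorem \ref{teo.generteo3.1} under the assumption that each $\sigma_i = \mathrm{id}_R$. Two simplifications then arise. Since $\sigma^{\alpha}(r) = r$ for every $r\in R$ and $\alpha \in \mathbb{N}^n$, we have $r\sigma^{\alpha}(r) = r^2$; because $r^2$ nilpotent forces $r$ nilpotent, $N(R)$ is automatically $\Sigma$-rigid (this is Proposition \ref{prop-derivtype} (i)). Moreover, the condition $\sigma_i(I) = I$ in the definition of a $\Sigma$-ideal becomes a tautology, so $N(R)$ is a $\Sigma$-ideal precisely when $N(R)$ is an ideal of $R$, which, by Proposition \ref{prop-DefeqNI}, is equivalent to $R$ being NI.

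With these reductions in hand, the three equivalent conditions of Theorem \ref{teo.generteo3.1} translate, in the derivation-type case, into: $A$ is NI (the rigidity clause being automatic); $R$ is NI and $N(A) = N(R)\langle x_1,\dotsc,x_n\rangle$; and $R$ is NI and $N^{*}(A) = N^{*}(R)\langle x_1,\dotsc,x_n\rangle$. These are precisely our (ii), (iii), and (iv), so their three-way equivalence is immediate from Theorem \ref{teo.generteo3.1}. The only delicate step is the careful bookkeeping of these translations, in particular noting that $\Sigma$-rigidity of $N(R)$ is free in derivation type while a $\Sigma$-ideal collapses to an ordinary ideal; once this is verified the proposition follows at once from the two cited theorems.
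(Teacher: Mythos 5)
Your proof is correct, and for two of the three equivalences it coincides with the paper's: both obtain (i) $\Leftrightarrow$ (ii) by citing Theorem \ref{teo-NJssiNIderivtype}, and both obtain (iii) $\Leftrightarrow$ (iv) from Theorem \ref{teo.generteo3.1}. Where you diverge is in (ii) $\Leftrightarrow$ (iii): the paper proves this via Proposition \ref{prop-derivtype} (iv) together with Proposition \ref{prop-NIskewPBWimplRNI}, whereas you fold it into the same specialization of Theorem \ref{teo.generteo3.1}, observing that when every $\sigma_i={\rm id}_R$ the clause ``$N(R)$ is $\Sigma$-rigid'' is automatic (since $r\sigma^{\alpha}(r)=r^2\in N(R)$ forces $r\in N(R)$, i.e.\ Proposition \ref{prop-derivtype} (i)) and the clause ``$N(R)$ is a $\Sigma$-ideal'' collapses to ``$N(R)$ is an ideal,'' equivalently ``$R$ is NI'' by Proposition \ref{prop-DefeqNI}. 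Both reductions are verified correctly, so the three conditions of Theorem \ref{teo.generteo3.1} specialize exactly to (ii), (iii), (iv). What your route buys is uniformity: the entire three-way equivalence drops out of a single theorem, with the derivation-type hypothesis entering only through the two bookkeeping observations above; the paper's route instead leans on the separately proved characterization of NI derivation-type extensions in Proposition \ref{prop-derivtype} (iv), which makes the $\Delta$-invariance of $N(R)$ more visible but requires juggling an extra prior result.
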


\begin{proof}
(i) $\Leftrightarrow$ (ii) It follows from Theorem
\ref{teo-NJssiNIderivtype}.

(ii) $\Rightarrow$ (iii) If $A$ is NI then by Proposition \ref{prop-NIskewPBWimplRNI} we have that $R$ is NI. From Proposition \ref{prop-derivtype} (iv), $N(A)=N(R)\langle x_1, \dots, x_n \rangle$.

(iii) $\Rightarrow$ (ii) If $R$ is NI then $N(R)$ is an ideal of $R$, and by Proposition \ref{prop-derivtype} (iv), $N(R)$ is $\Delta$-invariant and therefore $A$ is NI.

(iii) $\Rightarrow$ (iv) If $R$ is NI then $N(R)$ is a
$\Sigma$-ideal. Theorem \ref{teo.generteo3.1}, part (ii) $\Rightarrow$ (iii), shows that $N^{*}(A)= N^{*}(R)\langle x_1, \dots, x_n \rangle$.

(iv) $\Rightarrow$ (iii) Since $R$ is NI, then $N(R)$ is an ideal of $R$. Now, by Proposition \ref{prop-derivtype} (i), $N(R)$ is $\Sigma$-rigid. Theorem \ref{teo.generteo3.1}, part (iii) $\Rightarrow$ (ii), implies that $N(A)= N(R)\langle x_1, \dots, x_n \rangle$.
\end{proof}

The next proposition establishes similar results to \cite{Nasr-Isf2015}, Corollary 3.3.

\begin{proposition}\label{prop-quasicomm}
If $A=\sigma(R)\langle x_1,\dots, x_n\rangle$ is a quasi-commutative skew PBW extension of $R$, then the following
statements are equivalent:
\begin{enumerate}
\item[\rm (i)] $A$ is NJ and $N(A)=N(R)\langle x_1, \dots, x_n \rangle$.
\item[\rm (ii)] $N(R)$ is a $\Sigma$-ideal of $R$ and $N(A)=N(R)\langle x_1, \dots, x_n \rangle$.
\item[\rm (iii)] $A$ is NI and $N(R)$ is $\Sigma$-rigid.
\item[\rm (iv)] $N(R)$ is $\Sigma$-rigid ideal of $R$ and $N^{*}(A)= N^{*}(R)\langle x_1, \dots, x_n \rangle$.
\end{enumerate}
\end{proposition}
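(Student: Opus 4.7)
The plan is to exploit Theorem \ref{teo.generteo3.1}, which already yields (ii)$\Leftrightarrow$(iii)$\Leftrightarrow$(iv) for any skew PBW extension, no quasi-commutativity required. The genuinely new content of the proposition is therefore the coupling of (i) with those three statements, and I intend to close the cycle by proving (i)$\Rightarrow$(ii) and (iii)$\Rightarrow$(i); the quasi-commutative hypothesis will enter only in the second implication.

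For (i)$\Rightarrow$(ii): from $A$ NJ one has $N(A)=J(A)$, which is a two-sided ideal of $A$, so $N(A)$ is an ideal and Proposition \ref{prop-DefeqNI} makes $A$ an NI ring; Proposition \ref{prop-NIskewPBWimplRNI} then makes $R$ NI, and $N(R)$ is an ideal of $R$. To upgrade $N(R)$ to a $\Sigma$-ideal I invoke only the injectivity of each $\sigma_i$ supplied by Proposition \ref{sigmadefinition}: for $r\in R$, $r\in N(R)$ iff $r^{k}=0$ iff $\sigma_i(r^{k})=\sigma_i(r)^{k}=0$ iff $\sigma_i(r)\in N(R)$, so $\sigma_{i}^{-1}(N(R))=N(R)$ for every $i$. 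The equality $N(A)=N(R)\langle x_1,\dots,x_n\rangle$ is part of the hypothesis in (i), so (ii) is secured.

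For (iii)$\Rightarrow$(i): Theorem \ref{teo.generteo3.1} converts the hypothesis of (iii) into $N(A)=N(R)\langle x_1,\dots,x_n\rangle$, which is the second conjunct of (i); it remains to show $A$ is NJ. By Corollary \ref{cor-QuasiNJiif NI}, this reduces to verifying that $J(A)\cap R$ is a nil ideal. Given $r\in J(A)\cap R$, I consider $rx_1\in J(A)$; by \cite{SuarezLezamaReyes2107-1}, Proposition 2.6, the quasi-commutative extension $A$ is a graded skew PBW extension in which $R$ carries the trivial grading, so $rx_1$ is a homogeneous element of $A$ of degree one sitting inside $J(A)$ of the NI graded ring $A$. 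The homogeneous-component lemma \cite{Nasr-Isf2015}, Lemma 2.3(2), then places $rx_1\in N(A)=N(R)\langle x_1,\dots,x_n\rangle$, whence $r\in N(R)$. Thus $J(A)\cap R\subseteq N(R)$ is nil (as $R$ is NI), and Corollary \ref{cor-QuasiNJiif NI} concludes that $A$ is NJ.

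The main obstacle lies in the implication (iii)$\Rightarrow$(i), precisely at the containment $J(A)\cap R\subseteq N(R)$: it is the unique step in which both quasi-commutativity (through the graded skew PBW realization of \cite{SuarezLezamaReyes2107-1}) and the Nasr-Isfahani homogeneous-component lemma are genuinely needed. Once that step is in hand, everything else is clean bookkeeping on top of Theorem \ref{teo.generteo3.1} and Corollary \ref{cor-QuasiNJiif NI}, mirroring the template already used in the proof of Theorem \ref{teo-genteo3.10}.
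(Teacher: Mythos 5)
Your proposal is correct and follows essentially the same route as the paper: the equivalence of (ii), (iii) and (iv) is delegated to Theorem \ref{teo.generteo3.1}, (i)$\Rightarrow$(ii) is obtained from the injectivity of the $\sigma_i$ exactly as in the proof of that theorem, and (iii)$\Rightarrow$(i) uses the graded realization of \cite{SuarezLezamaReyes2107-1}, Proposition 2.6, together with \cite{Nasr-Isf2015}, Lemma 2.3(2), to show $J(A)\cap R\subseteq N(R)$ and then invokes Corollary \ref{cor-QuasiNJiif NI}. No discrepancies to report.
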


\begin{proof} (i) $\Rightarrow$ (ii) If $A$ is NJ we know that $A$ is NI, and therefore $R$
is NI, so $N(R)$ is an ideal of $R$. In the proof of Theorem
\ref{teo.generteo3.1}, part (i) $\Rightarrow$ (ii), it was already shown
that $N(R)$ is a $\Sigma$-ideal.

(ii) $\Rightarrow$ (iii) Theorem
\ref{teo.generteo3.1}, part (ii) $\Rightarrow$ (i).

(iii) $\Rightarrow$ (i) If $r\in J(A)\cap R$ then $rx_1\in J(A)$. From \cite{SuarezLezamaReyes2107-1}, Proposition 2.6, $A$ is
a graded skew PBW extension with the trivial graduation of $R$. Thus $rx_1$ is a homogeneous component of $J(A)$ with grade 1. By
\cite{Nasr-Isf2015}, Lemma 2.3 (2), we have that $rx_1\in N(A)$. Since $A$ is NI and $N(R)$ is $\Sigma$-rigid, then Theorem \ref{teo.generteo3.1}, implication (i) $\Rightarrow$ (ii), guarantees that $N(A)=N(R)\langle x_1, \dots, x_n \rangle$. Since $rx_1\in N(A)$, it follows that $r\in N(R)$, and thus $J(A)\cap R$ is a nil ideal of $R$. Corollary \ref{cor-QuasiNJiif
NI} implies that $A$ is NJ.

(iii) $\Leftrightarrow$ (iv) This is precisely the content of Theorem \ref{teo.generteo3.1}, equivalence (ii) $\Leftrightarrow$ (iii).
\end{proof}

\section{Future work}
In this article, we have established necessary or sufficient conditions to ensure that skew PBW extensions are either NI or NJ rings. Now, since the notion of weak compatibility introduced in \cite{ReyesSuarez2019-2} is more general than the notion of compatibility defined in \cite{HashemiKhalilAlhevaz2017} and \cite{ReyesSuarez2018-2}, a first task is to extend the results obtained here that used the compatibility condition, now assuming the weak compatibility condition.

On the other hand, since skew PBW extensions are part of a more general family of noncommutative algebraic structures such as the {\em semi-graded rings} introduced by Lezama and Latorre \cite{LezamaLatorre2017}, which have been studied recently from the point of view of noncommutative algebraic geometry (topics such as Hilbert series and Hilbert polynomial, generalized Gelfand-Kirillov dimension, non-commutative schemes, and Serre-Artin-Zhang-Verevkin theorem, see Lezama et al., \cite{Lezama2020} and \cite{LezamaGomez2019}), and having in mind that semi-graded rings are more general than $\mathbb{N}$-graded rings (of course, we mean rings that do not admit a non-trivial $\mathbb{N}$-graduation), the natural task is to investigate NI and NJ properties for semi-graded rings with the aim of extending results established by Nasr-Isfahani \cite{Nasr-Isf2015} in the context of graded rings.

%
%



\end{document}